\newtheorem{theorem}{Theorem}
\newtheorem{proposition}[theorem]{Proposition}
\newtheorem{corollary}[theorem]{Corollary}
\newtheorem{definition}[theorem]{Definition}
\newtheorem{remark}[theorem]{Remark}
\newcommand{\G}{\Gamma }
\date{}
\begin{document}


\title{A little more about bipartite biregular cages, block designs and generalized polygons}
\author{Gabriela Araujo-Pardo\footnote{Research supported in part by PAPIIT-UNAM-M{\' e}xico IN101821 and Proyecto de Intercambio Académico de la Coordinación de la Investigación Científica de la UNAM "Bipartite biregular cages, block designs and generalized quadrangles".
} \\
garaujo@im.unam.mx
\\
\\
Gy\"orgy Kiss\footnote{This research was supported in part  by the Hungarian National Research, Development and Innovation Office  OTKA grant no. SNN 132625.} \\
gyorgy.kiss@ttk.elte.hu
\\
\\
Tamás Sz\H onyi\footnote{This research was supported in part by the Hungarian National Research, Development and Innovation Office  OTKA grant no. K 124950.}  \\
tamas.szonyi@ttk.elte.hu}
\maketitle

\begin{abstract}
In this paper, we obtain new lower and upper bounds for the problem of bipartite biregular cages. Moreover, for girth $6$, we give the exact parameters of the 
$(m,n;6)$-bipartite biregular cages when $n\equiv -1$ $\pmod m$ using the existence of Steiner System system $S(2,k=m,v=1+n(m-1)+m)$. 

For girth $g=2r$ and $r=\{4,6,8\}$, we use results on $t$-good structures given by ovoids, spreads and sub-polygons in generalized polygons to obtain 
$(m,n;2r)$-bipartite biregular graphs. We emphasize that, as we improve the lower bounds on the order of these graphs, we also prove that some of them are 
$(m,n;2r)$-bipartite biregular cages.

In particular, we construct relatively small bipartite biregular graphs from a special class of generalized quadrangles and hexagons. In a special case, we show that the graph obtained is actually a $(3,4;8)$-bipartite biregular cage on $56$ vertices.
\end{abstract}

\section{Introduction, definitions and basic properties}

The cage problem is a classical problem in extremal graph theory. In a regular graph of degree $k$ and girth $g$ we ask for the minimum number of vertices. Since it is a classical problem and there is a dynamic survey by Exoo and Jajcay \cite{ExooJaj08} about it, we do not go in detail. It is natural to extend the problem for non-regular graphs. In particular, Yuansheng and Liang \cite{YuLi03} extended it to biregular graphs. Later Filipowski, Jajcay and Ramos-Rivera \cite{FilRamRivJaj19} further specialized the problem for bipartite biregular graphs, that is for bipartite graphs with the property that the vertices of one bipartition class have degree $m$ and vertices of the other bipartition class have degree $n$. In this variant of the problem, the girth is clearly even, and we may assume that $g>4$. Such a bipartite biregular graph is the incidence graph of a regular uniform set system, so one may expect that cages will correspond to special set systems. This connection was made clear in the paper by Araujo-Pardo, Jajcay, Ramos-Rivera, and Sz\H onyi \cite{AJRS22}. The aim of the present paper is to add further remarks to the paper \cite{AJRS22}. Actually, bipartite biregular graphs and the above variant of the cage problem were already introduced by F\"uredi, Lazebnik, Seress, Ustimenko, and Woldar \cite{FuLaSeUsWol95}, who considered the problem in the asymptotic sense. They called a bipartite graph an $(r,s,t)$-graph if it is bipartite biregular, where the two degrees are $r$ and $s$ and the girth is $2t$. Among other things, they showed that for every $r,s,t\ge 2$ there exist such graphs and proved several asymptotic results for them. 
In the past few years, many authors have studied bipartite cages and obtained different results regarding their size, see \cite{AraExoJaj, BobJajPis, ExooJaj16, Fil17, JWGJS}.

In Section 2, we consider girth $2r$ with $r$ odd and are mainly interested in improving slightly the lower bounds given in the paper \cite{AJRS22}, using divisibility conditions for bipartite biregular graphs. 

In the last section, we consider girth $g=2r$ with $r\geq 4$ even and improve the general lower bounds given previously also in \cite{AJRS22}. In particular, for $r=\{4,6,8\}$, we divide the section into two parts. First, we construct 
$(m,n;2r)$-bipartite biregular graphs by deleting vertices and edges of the incidence graph of some specific generalized polygons. In the second part, we use $t$-good and 
$(s,t)$-good structures, in particular ovoids, spreads and sub-polygons in generalized polygons to obtain new $(m,n;2r)$-bipartite biregular graphs. The notion of $t$-good structure was introduced by G\'acs and H\'eger \cite{GH,DHS} while the $(s,t)$-good structures first appeared in a paper of Beukemann and Metsch \cite{BeuM}. Previously, these structures were only used to construct regular graphs, we now extend them to investigate bipartite biregular graphs. We emphasize that, as we  improve the lower bounds on the order of these graphs, we also prove that some of them have the minimum order, 
consequently they are 
$(m,n;2r)$-bipartite biregular cages. In particular, we construct relatively small bipartite biregular graphs from a special class of generalized quadrangles and hexagons. For example, we show that one of the graph obtained is actually a 
$(3,4;8)$-bipartite biregular cage on $56$ vertices.

Here we give only the most necessary definitions. For a detailed introduction to block designs and Steiner systems, we refer the reader to \cite{cvl} and \cite{ColDin}, while the concepts from finite geometries we use can be found for example in \cite{KSz} and \cite{VM98}.

Let $\G$ denote a finite, connected, simple graph with vertex set $V=V(\G)$ and edge set $E=E(\G)$.
Let $d$ denote the minimal path-length distance function of $\G$. 
For $v \in V$ and an integer $i$ we let $\G_i(v) = \{w \in V \mid d(v,w)=i\}$.  For an edge $uv$ of $\G ,$ let $D_j^i(u,v)=\G_i(u)\cap \G_j(v).$ If $\G$ is not a tree, then the {\em girth} $g$ of $\G$ is the length of the shortest cycle in $\G$. If $C$ is a cycle of $\G$ of girth length $g$, 
then we refer to $C$ as a {\em girth cycle} of $\G$. 

\begin{definition}
Let $2\leq m\leq n$ and $4\leq g$ be integer numbers. An {\emph{$(m,n;g)$-bipartite biregular graph}} is a bipartite graph of even girth $g$ having degree set equal to $\{ m,n\} $ and satisfying the additional property that the vertices in the same bipartition class have the same degree.  We denote it as 
$(m,n;g)$-bb-graph and call it, by simplicity, a bb-graph. An $(m,n;g)$-{\emph{bipartite biregular cage}} (shortly $(m,n;g)$-bb-cage) is an $(m,n;g)$-bb-graph of minimum order, we also call it a bb-cage. Moreover, $B_c(m,n;g)$ denotes the size of an 
$(m,n;g)$-bb-cage.
\end{definition}

Let $\G $ be a $(2,n;g)$-bb-graph with bipartition $A,B$ such that vertices in 
$B$ have degree two. Then we can define a graph $\G '$ in the following way: 
$V(\G ')=A$ and there is an edge between vertices $u$ and $v$ if and only if $d(u,v)=2$ in $\G .$ Then $\G '$ is an $(n,g/2)$-graph and $\G $ is an $(m,n;g)$-bb-cage if and
only if $\G '$ is an $(n,g/2)$-cage.
Therefore, in the rest of this paper we also assume $m >2$.

In \cite{FilRamRivJaj19}, Filipovski, Ramos-Rivera and Jajcay proved the generalized Moore Bound $B(m,n;2r)$. 


\begin{proposition}[Bipartite Moore Bound for even $r$]
Let $ 3 \le m < n $ and $r$ even, then:
\begin{equation}
\label{genMoore-even}
B(m,n;2r)=
(m+n)\frac{\left( (m-1)(n-1)\right) ^{\frac{r}{2}}-1}{(m-1)(n-1)-1}.
\end{equation}
\end{proposition}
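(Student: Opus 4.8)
The plan is to establish the formula as a lower bound on the order of an arbitrary $(m,n;2r)$-bb-graph by a breadth-first tree-counting argument, carried out once from each bipartition class. Fix a bb-graph $\G$ with bipartition $A\cup B$, where the vertices of $A$ have degree $m$ and those of $B$ have degree $n$, and write $q=(m-1)(n-1)$; since $3\le m<n$ we have $q\ge 4>1$, so all the geometric sums below are well defined.

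First I would bound $|A|$ from below. Choose any $w\in B$ and grow the BFS tree from $w$. Because $\G$ is bipartite, $\G_i(w)\subseteq A$ for $i$ odd and $\G_i(w)\subseteq B$ for $i$ even. The root contributes $|\G_1(w)|=n$, and each vertex reached thereafter contributes $m-1$ or $n-1$ new neighbours at the next level according to its class, so that $|\G_{2i+1}(w)|=nq^{\,i}$ for every $i\ge 0$ for which the tree structure persists. The key step is to verify that this structure is genuine out to radius $r-1$: if two distinct paths of length at most $r-1$ reached a common vertex, their union would contain a closed walk of length at most $2(r-1)=2r-2<2r$, hence a cycle shorter than the girth, a contradiction; the same bound rules out any short-circuiting edge within the ball. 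Consequently the vertices counted at distances $1,3,\dots,r-1$ from $w$ are pairwise distinct elements of $A$.

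Here the hypothesis that $r$ is even is exactly what makes the count close up cleanly: the largest odd integer not exceeding $r-1$ is $r-1$ itself, so the admissible odd distances are $2i+1$ with $i=0,1,\dots,\tfrac{r}{2}-1$, giving precisely $\tfrac{r}{2}$ levels. Summing the geometric series yields
\[
|A|\ \ge\ \sum_{i=0}^{r/2-1} nq^{\,i}\ =\ n\,\frac{q^{r/2}-1}{q-1}.
\]
Running the symmetric argument from a vertex $u\in A$ and counting the vertices of $B$ at odd distances $1,3,\dots,r-1$ gives $|B|\ge m\,(q^{r/2}-1)/(q-1)$ in the same way. Since $A$ and $B$ are disjoint, adding the two estimates gives $|V(\G)|=|A|+|B|\ge (m+n)\,(q^{r/2}-1)/(q-1)$, which is the asserted bound $B(m,n;2r)$.

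I expect the only genuine obstacle to be the rigorous justification of the distinctness (tree) claim together with the careful bookkeeping of parity. One must check that no two counted vertices coincide and that no edge short-circuits the ball of radius $r-1$, both of which reduce to the girth being $2r$; and one must track the parity of $r$ so that the opposite-class levels terminate exactly at distance $r-1$ and produce the $\tfrac{r}{2}$ terms of the series. Had $r$ been odd, these levels would instead stop at distance $r-2$, yielding a different closed form — which is precisely why the odd case is treated separately in Section~2.
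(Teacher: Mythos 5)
Your argument is correct and proves the proposition. One point of comparison: the paper itself does not prove this statement --- it quotes it from \cite{FilRamRivJaj19} --- but the argument it recalls in Section 3.1 is the edge-rooted form of the same count: any $(m,n;2r)$-bb-graph must contain the Moore tree $\mathcal{T}_{uv}$, an edge $\{u,v\}$ together with two disjoint trees of depth $r-1$ attached to its endpoints, and $B(m,n;2r)$ is the number of vertices of that tree. Your version instead roots the count at one vertex of each class, using bipartiteness to place the odd BFS levels in the opposite class, and bounds $|A|\ge n\,(q^{r/2}-1)/(q-1)$ and $|B|\ge m\,(q^{r/2}-1)/(q-1)$ separately, where $q=(m-1)(n-1)$; the two totals agree, and the key girth fact (balls of radius $r-1$ are trees) is identical in both formulations. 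What the edge-rooted version buys is slightly more than a cardinality bound: it exhibits all $B(m,n;2r)$ vertices inside a single connected subtree, which is exactly what the paper exploits later when it constructs graphs by deleting parts of $\mathcal{T}_{uv}$; your version buys cleaner parity bookkeeping and matches the style the paper itself uses in Section 2 for odd $r$ (BFS from a vertex of one class). One phrase in your write-up needs tightening: ``a closed walk of length at most $2r-2$, hence a cycle shorter than the girth'' --- a closed walk need not contain a cycle in general; the correct statement is that the union of two \emph{distinct} paths with the same endpoints contains a cycle of length at most the sum of their lengths, and it is that cycle which contradicts girth $2r$.
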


\begin{definition} 
\label{excess}
If $\Gamma$ is a graph that attains this lower bound we call it a 
\emph {bb-Moore cage}. Then, as usual, the difference between the order of an 
$(m,n;2r)$-graph $\Gamma $ and the corresponding bipartite Moore bound $B(m,n;2r)$ is called the 
\emph{excess} of $\Gamma $. Consequently, a graph of excess $0$ is a bb-Moore cage.
\end{definition}

It is well-known that, if $m\neq n$, this bound is only attained when $r$ is even, and the corresponding graphs are related to the incidence graphs of certain finite geometries.

 The \emph{incidence graph} (also known as \emph{Levi graph}) of a point-line
incidence geometry is a bipartite graph whose bipartition sets correspond to the
set of points and lines, respectively, and there is an edge between two vertices if
and only if the corresponding point is incident with the corresponding line.

The next "folklore" statement gives an important correspondence between generalized polygons and biregular graphs. The proof can be found for example in  
\cite[Lemma 1.3.6]{VM98}.
\begin{theorem}
\label{g-g}
A finite thick generalized $r$-gon $\mathcal{G}$ exists if and only if there exists a connected bipartite biregular graph $\G$ of diameter $r$ and girth $2r,$ such that each vertex has degree at least three. In this case, $\G$ is the incidence graph of $\mathcal{G}$.
\end{theorem}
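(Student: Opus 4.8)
The statement is in essence a reformulation of the definition of a generalized polygon together with the regularity theorem for thick incidence geometries, so the plan is to treat the two implications separately and to concentrate almost all of the work in one of them.

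For the easy direction, I would start from a connected bipartite biregular graph $\Gamma$ of diameter $r$ and girth $2r$ in which every vertex has degree at least three, and define $\mathcal{G}$ by declaring the two bipartition classes to be the point set and the line set, with a point incident to a line exactly when the corresponding vertices are adjacent. Since $\Gamma$ is bipartite with these two classes, $\Gamma$ is literally the incidence graph of $\mathcal{G}$; the conditions ``diameter $r$'' and ``girth $2r$'' are precisely the defining axioms of a generalized $r$-gon, and the hypothesis that every vertex has degree at least three says exactly that every point lies on at least three lines and every line carries at least three points, i.e. that $\mathcal{G}$ is thick. Nothing further is required here.

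The substance lies in the converse: I must show that the incidence graph $\Gamma$ of a thick generalized $r$-gon $\mathcal{G}$ (which by definition is bipartite, connected, of diameter $r$ and girth $2r$, with all degrees at least three) is in fact biregular. The first step is a uniqueness lemma: if $d(x,y)=k<r$ then there is a unique path of length $k$ joining $x$ and $y$, since two distinct such geodesics would union to a closed walk of length at most $2k<2r$ containing a cycle shorter than the girth. The key step is then to prove that two opposite vertices, i.e. vertices $u,v$ with $d(u,v)=r$, have equal degree. For any neighbor $w$ of $u$ the bipartite parity forces $d(w,v)\in\{r-1,r+1\}$, and the diameter bound forces $d(w,v)=r-1$; I would define $\phi(w)$ to be the penultimate vertex of the unique geodesic from $w$ to $v$, which is a neighbor of $v$. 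Using the uniqueness lemma at distance $r-1$ one checks that $\phi$ is injective (two neighbors of $u$ routed through the same neighbor $z$ of $v$ would yield two distinct geodesics of length $r-1$ from $u$ to $z$) and surjective (each neighbor $z$ of $v$ satisfies $d(u,z)=r-1$, and the first edge of the geodesic from $u$ to $z$ produces a preimage of $z$). Hence $\phi$ is a bijection $\Gamma_1(u)\to\Gamma_1(v)$, so $\deg u=\deg v$.

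Finally I would upgrade ``opposite vertices have equal degree'' to full biregularity. Having equal degree is an equivalence relation that contains all opposite pairs, so the equivalence classes are unions of connected components of the opposition graph on the vertices of $\Gamma$. Thickness guarantees that every vertex possesses an opposite vertex and, more importantly, that within each bipartition class any two vertices can be linked by a chain whose consecutive members share a common opposite; combined with connectivity this forces all points to share one degree $s+1$ and all lines to share one degree $t+1$, which is exactly biregularity. I expect this last passage to be the main obstacle: the bijection argument only compares vertices at the extremal distance $r$, and one must exploit the thickness hypothesis carefully to guarantee enough opposite vertices, and enough overlap among them, to propagate the equality of degrees throughout each class. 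This is precisely where the assumption of degree at least three, as opposed to the thin (ordinary $n$-gon) case, is genuinely used.
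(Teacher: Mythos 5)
Your outline follows the standard argument---which is also what the paper itself relies on: the paper gives no proof of this theorem but defers to \cite[Lemma 1.3.6]{VM98}---and its first three steps are correct. The backward direction is indeed immediate once one adopts the diameter/girth definition of a generalized polygon (though note that if, as in \cite{VM98}, generalized polygons are defined geometrically via ordinary sub-polygons, then the equivalence of that definition with the diameter/girth conditions is itself part of the content of the cited lemma, so ``nothing further is required'' is true only relative to your chosen definition). Your uniqueness lemma for geodesics of length less than $r$, and the bijection $\phi$ between the neighbourhoods of opposite vertices, are both argued correctly.

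The genuine gap is the final propagation step, and you flag it yourself: the claim that thickness lets you link any two vertices of a bipartition class by a chain of consecutive common opposites is asserted, never proved, and it is precisely the step where degree at least three enters. Here is the missing argument, which is short. First, every vertex has an opposite: if $d(x,y)=k<r$, uniqueness of geodesics shows that exactly one neighbour of $y$ is at distance $k-1$ from $x$, so minimum degree at least $2$ yields a neighbour at distance $k+1$, and one extends until distance $r$ is reached. Now let $p\neq q$ be two vertices at distance $2$ with common neighbour $L$, and pick $\zeta$ opposite $L$. Parity and the diameter bound give $d(p,\zeta)=d(q,\zeta)=r-1$; uniqueness of geodesics shows that exactly one neighbour of $\zeta$ lies at distance $r-2$ from $p$ and all other neighbours lie at distance $r$ from $p$, and the same holds for $q$. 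Since $\deg\zeta\geq 3$ by thickness, some neighbour $z$ of $\zeta$ is opposite both $p$ and $q$, whence $\deg p=\deg z=\deg q$ by your bijection. Connectivity of $\Gamma$ then joins any two vertices of the same class by a chain of vertices of that class with consecutive members at distance $2$, so each class has constant degree; the argument is uniform in the parity of $r$ (for odd $r$ the common opposite $z$ lies in the other bipartition class, which in addition forces the two degrees to coincide, as they must for odd $r$). Without this lemma, your observation that ``equal degree'' is an equivalence relation containing opposition proves nothing, because the opposition relation could a priori split a bipartition class into several components of different degrees; and the hypothesis cannot be dropped, since for non-thick weak polygons---for example a grid with unequal sides, whose incidence graph is connected of diameter $4$ and girth $8$---biregularity genuinely fails.
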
 

Two easy corollaries of this theorem were given by Araujo-Pardo et al. \cite{AJRS22}.
We recall (and reformulate) their results only in the $r$ even case. 

\begin{corollary}[{Araujo-Pardo et al., \cite[Theorem 4.]{AJRS22}}]
\label{cor-3}
If $r$ is even and $3\leq m\leq n$, then a bipartite biregular $(m,n;2r)$-graph of order $B(m,n,2r)$ is necessarily the Levi graph of a generalized polygon, and $r=4,$ $6$
or $8.$ Conversely, the Levi graph of a generalized $r$-gon for $r=4,\, 6,\, 8$ is a
bb-Moore cage.  
\end{corollary}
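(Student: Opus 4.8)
The plan is to prove the two implications separately, using Theorem~\ref{g-g} as the bridge between the combinatorial and the geometric descriptions, and invoking the Feit--Higman theorem only at the very end to pin down the admissible values of $r$. The whole argument reduces to two things: an equality analysis of the Bipartite Moore Bound \eqref{genMoore-even}, and a routine count of the points and lines of a generalized polygon.

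For the forward implication, let $\Gamma$ be an $(m,n;2r)$-bb-graph whose order equals $B(m,n;2r)$. I would first recall that the proof of \eqref{genMoore-even} in fact yields the sharper statement that, for every vertex $v$, at least $B(m,n;2r)$ vertices lie within distance $r$ of $v$: since the girth is $2r$, the ball of radius $r-1$ about $v$ carries no cycle and is a tree in which the two degrees alternate, and a saturation count on the sphere of radius $r$ accounts for the remaining term. As $\Gamma$ has exactly $B(m,n;2r)$ vertices in total, this ball of radius $r$ must already exhaust $V(\Gamma)$, so every vertex has eccentricity at most $r$ and $\operatorname{diam}(\Gamma)\le r$. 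For the reverse inequality I would take two antipodal vertices of a girth cycle: any path joining them of length less than $r$ would, together with one arc of the cycle, contain a cycle shorter than $2r$, contradicting the girth; hence $\operatorname{diam}(\Gamma)\ge r$ and therefore $\operatorname{diam}(\Gamma)=r$. Now $\Gamma$ is connected, bipartite biregular, of diameter $r$ and girth $2r$, with every degree at least $m\ge 3$, so Theorem~\ref{g-g} identifies it as the incidence graph of a finite thick generalized $r$-gon. Finally, the standing hypothesis $g>4$ gives $r\ge 3$, so that $r\ge 4$ as $r$ is even; the Feit--Higman theorem, which admits a finite thick generalized $r$-gon only for $r\in\{2,3,4,6,8\}$, then leaves $r\in\{4,6,8\}$.

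For the converse, let $\Gamma$ be the Levi graph of a thick generalized $r$-gon of order $(s,t)$ with $r\in\{4,6,8\}$. By Theorem~\ref{g-g} it is connected and bipartite biregular of girth $2r$, with points of degree $t+1$ and lines of degree $s+1$; setting $\{m,n\}=\{s+1,t+1\}$ with $m\le n$, both degrees are at least $3$ by thickness, while $(m-1)(n-1)=st$ and $m+n=s+t+2$. It then only remains to count vertices: using the standard formulas, the polygon has $(s+1)\sum_{i=0}^{r/2-1}(st)^i$ points and $(t+1)\sum_{i=0}^{r/2-1}(st)^i$ lines, so
\[
|V(\Gamma)|=(s+t+2)\sum_{i=0}^{r/2-1}(st)^i=(m+n)\,\frac{\big((m-1)(n-1)\big)^{r/2}-1}{(m-1)(n-1)-1}=B(m,n;2r).
\]
Hence $\Gamma$ attains the Bipartite Moore Bound and is a bb-Moore cage, as claimed.

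I expect the only genuine obstacle to be the equality analysis in the forward implication, namely the passage from ``$\Gamma$ has exactly $B(m,n;2r)$ vertices'' to ``$\operatorname{diam}(\Gamma)\le r$''. This requires the ball-of-radius-$r$ refinement of \eqref{genMoore-even} described above, and in particular the verification that the saturation bound on the sphere of radius $r$ contributes exactly the closed-form term appearing in \eqref{genMoore-even}; once this is in place, the remaining steps (the girth-cycle argument for $\operatorname{diam}(\Gamma)\ge r$, the appeals to Theorem~\ref{g-g} and Feit--Higman, and the vertex count of the converse) are routine.
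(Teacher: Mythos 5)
Your proposal is correct and takes essentially the same route the paper intends: the paper states this result as an easy corollary of Theorem~\ref{g-g} (citing \cite{AJRS22}), and your argument --- equality in the bound \eqref{genMoore-even} forces every vertex to have eccentricity at most $r$, the girth-cycle argument gives diameter exactly $r$, Theorem~\ref{g-g} then identifies $\Gamma$ as the Levi graph of a thick generalized $r$-gon, Feit--Higman restricts $r$ to $\{4,6,8\}$, and the standard point/line count of a generalized $r$-gon of order $(m-1,n-1)$ gives the converse --- is precisely that derivation. The one step you flag as a potential obstacle, namely that the single-vertex ball-of-radius-$(r-1)$ count plus the saturation bound $|S_r|\ge |S_{r-1}|(m-1)/n$ on the last sphere sums to exactly $B(m,n;2r)$, does check out algebraically (the telescoping identity holds for all even $r$), so there is no gap.
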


\begin{corollary}[{Araujo-Pardo et al., \cite[Corollary 8.]{AJRS22}}]
\label{cor-8}
Let $3\leq m\leq n,$ $r\geq 4$, and suppose that no generalized 
$r$-gon of order $(m-1,n-1)$ exists. Then
$$B_c(m,n;2r)\geq B(m,n;2r)+\frac{m+n}{\mathrm{gcd}(m,n)}.$$
\end{corollary}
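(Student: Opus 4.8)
The plan is to combine two elementary but complementary constraints on an arbitrary $(m,n;2r)$-bb-graph: a \emph{per-class} form of the Moore bound, and a divisibility condition obtained by counting edges. Throughout, write $d=\gcd(m,n)$, $q=(m-1)(n-1)$, and $N=\frac{q^{r/2}-1}{q-1}=1+q+\cdots+q^{r/2-1}$, so that the Proposition reads $B(m,n;2r)=(m+n)N$. Let $\Gamma$ be an $(m,n;2r)$-bb-cage with classes $A$ (vertices of degree $m$) and $B$ (vertices of degree $n$). First I would record the standard ball-counting estimate applied to each class separately: fixing a vertex of $B$ and enumerating the vertices of $A$ inside its ball of radius $r-1$ (which is genuinely a tree, since the girth equals $2r$) gives $|A|\ge nN$, and symmetrically $|B|\ge mN$. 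Adding these recovers $|A|+|B|\ge B(m,n;2r)$, and Corollary \ref{cor-3} tells us that equality forces $\Gamma$ to be the Levi graph of a generalized $r$-gon of order $(m-1,n-1)$.

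The second ingredient is the divisibility step. Because $\Gamma$ is bipartite, double counting its edges across the two classes gives $m|A|=n|B|=|E(\Gamma)|$. Writing $m=dm'$ and $n=dn'$ with $\gcd(m',n')=1$, this becomes $m'|A|=n'|B|$, so $n'\mid|A|$ and $m'\mid|B|$; hence $|A|=n'k$ and $|B|=m'k$ for some positive integer $k$, and the order satisfies
\[
|A|+|B|=(m'+n')k=\frac{m+n}{\gcd(m,n)}\,k .
\]
Thus the order of every $(m,n;2r)$-bb-graph is a multiple of $\frac{m+n}{\gcd(m,n)}$. I would then observe that the Moore bound itself lies on this same lattice, since $B(m,n;2r)=(m+n)N=\frac{m+n}{\gcd(m,n)}\cdot dN$ corresponds to $k=dN$.

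To conclude, I would invoke the hypothesis. If the order of $\Gamma$ equalled $B(m,n;2r)$, then by the first paragraph $\Gamma$ would be the Levi graph of a generalized $r$-gon of order $(m-1,n-1)$, contradicting the assumption that no such polygon exists; hence the order of $\Gamma$ strictly exceeds $B(m,n;2r)$. Being a multiple of $\frac{m+n}{\gcd(m,n)}$ strictly larger than the multiple $B(m,n;2r)=\frac{m+n}{\gcd(m,n)}\cdot dN$, it is at least $\frac{m+n}{\gcd(m,n)}(dN+1)=B(m,n;2r)+\frac{m+n}{\gcd(m,n)}$, which is exactly the asserted bound. The step requiring the most care is this final rounding argument: one must check that the Moore bound sits \emph{precisely} on the lattice $\frac{m+n}{\gcd(m,n)}\Z$, so that excluding it as an attainable order advances the minimum by one full increment $\frac{m+n}{\gcd(m,n)}$ rather than merely by $1$. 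The only other subtlety is ensuring that the characterization of equality in Corollary \ref{cor-3} is exactly what the non-existence hypothesis rules out, so that the strict inequality is justified before the divisibility is applied.
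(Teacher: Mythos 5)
Your proof is correct and takes essentially the same route as the source: the paper states this corollary without proof (citing \cite{AJRS22}), but the underlying argument there, echoed in the divisibility reasoning this paper reuses in Section \ref{girth6}, consists of exactly your two ingredients --- the edge count $m|A|=n|B|$ forcing the order of any $(m,n;2r)$-bb-graph into the lattice $\frac{m+n}{\gcd(m,n)}\Z$, and Corollary \ref{cor-3} ruling out equality with the Moore bound under the non-existence hypothesis. Your final check that $B(m,n;2r)=(m+n)N$ itself lies precisely on this lattice is indeed the key point that upgrades the strict inequality to a full increment of $\frac{m+n}{\gcd(m,n)}$.
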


Actually, in the paper \cite{FilRamRivJaj19}, the authors also define the excess when $r$ is
odd. However, if $r$ is odd, the bound given by counting the vertices of the Moore tree is only attained when $m=n$ (the regular case). The following bounds were given in \cite{AJRS22} and improve the obvious Moore bound showing that the excess cannot be small.

\begin{proposition}[Bipartite Bound for odd $r$]
Let $ 3 \le m < n $ and $r$ even, then:
\begin{equation} \label{gen-lower-bounds-odd}
 B_c(m,n,2r)\geq
 \begin{array}{c}1 + n + n(m-1) + n(m-1)(n-1) + n(m-1)^2(n-1) + \ldots  \\ + \; n(m-1)^{\frac{r-1}{2}}(n-1)^{\frac{r-3}{2}} + 
\left \lceil \frac{n(m-1)^{\frac{r-1}{2}}(n-1)^{\frac{r-1}{2}}}{m} \right \rceil. \end{array}
\end{equation}
Moreover, if $ \frac{n(m-1)^{\frac{r-1}{2}}(n-1)^{\frac{r-1}{2}}}{m}  $ is not an integer, then we have that: 
\begin{equation} \label{special-lower-bounds-odd} 
B_c(m,n,2r)\geq
\begin{array}{c} 1 + n + n(m-1) + n(m-1)(n-1) + n(m-1)^2(n-1) + \ldots  \\ + \; n(m-1)^{\frac{r-1}{2}}(n-1)^{\frac{r-3}{2}} + 
\left \lceil \frac{n(m-1)^{\frac{r-1}{2}}(n-1)^{\frac{r-1}{2}}}{m} \right \rceil + \left \lceil \frac{n}{m} \right \rceil. \end{array}
\end{equation}
\end{proposition}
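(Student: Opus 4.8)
The plan is to grow a breadth-first tree from a carefully chosen root, combine the usual Moore-type vertex count with a handshake (edge-counting) argument at the critical distance, and then exploit the non-divisibility hypothesis by pushing the resulting ``overflow'' two levels further down. Throughout, $r$ is odd (the running assumption of this part of the paper). First I would fix a vertex $v_0$ of degree $n$ (one exists, since the two bipartition classes carry the two distinct degrees) and set $n_i=|\G_i(v_0)|$. Writing $A,B$ for the classes of degree $m,n$, we have $v_0\in B$, so $\G_i(v_0)\subseteq B$ for even $i$ and $\G_i(v_0)\subseteq A$ for odd $i$; as $r$ is odd, the vertices at distance $r$ have degree $m$ and those at distance $r+1$ have degree $n$. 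Since the girth is $2r$, the ball of radius $r-1$ about $v_0$ is a tree, so $n_0=1$, $n_1=n$, and $n_i=(m-1)n_{i-1}$ or $(n-1)n_{i-1}$ according as $i$ is even or odd. A direct check shows $\sum_{i=0}^{r-1}n_i$ equals the displayed partial sum ending in $n(m-1)^{(r-1)/2}(n-1)^{(r-3)/2}$, and that all these vertices are distinct and lie at distance $<r$.

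For the first bound I would count the edges $E$ between $\G_{r-1}(v_0)$ and $\G_r(v_0)$. Because the ball of radius $r-1$ is a tree, each level-$(r-1)$ vertex (degree $n$) has a unique parent and therefore sends exactly $n-1$ edges down to distance $r$, so $E=(n-1)n_{r-1}=n(m-1)^{(r-1)/2}(n-1)^{(r-1)/2}$. A level-$r$ vertex has degree $m$ and hence meets at most $m$ of these edges, giving $n_r\ge\lceil E/m\rceil$. Adding this to the tree count is exactly \eqref{gen-lower-bounds-odd}.

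For the sharpened bound, assume $m\nmid E$ and set $a=\lceil E/m\rceil$, $t=ma-E\in\{1,\dots,m-1\}$. Let $D_r=m\,n_r-E$ count the edges from level $r$ to level $r+1$, and $D_{r+1}=n\,n_{r+1}-D_r$ those from level $r+1$ to level $r+2$ (whose vertices again have degree $m$). Since $n_r\ge a>E/m$, we get $D_r\ge t\ge1$, so at least one overflow edge exists and $n_{r+1}\ge1$. The degree bound at level $r+2$ gives $n_{r+2}\ge D_{r+1}/m$, and after substituting the definitions of $D_{r+1}$ and $D_r$ the terms in $n_r$ cancel:
\[
 n_r+n_{r+1}+n_{r+2}\ \ge\ \frac{m+n}{m}\,n_{r+1}+\frac{E}{m}\ \ge\ \frac{m+n}{m}+\frac{E}{m}\ =\ a+\frac{m+n-t}{m}\ \ge\ a+\frac{n+1}{m}.
\]
As the left-hand side is an integer and $a$ is an integer, it is at least $a+\lceil n/m\rceil$; adding the tree count gives \eqref{special-lower-bounds-odd}.

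The pleasant feature here is that the cancellation of $n_r$ frees me from any case analysis on how far $n_r$ exceeds $a$: the decisive gain $\lceil n/m\rceil$ comes entirely from the single guaranteed overflow edge together with integrality of the total. Accordingly, I expect the only real work to be the bookkeeping that makes the handshake honest, namely checking that $D_r$ and $D_{r+1}$ are genuine nonnegative counts of edges dropping exactly one level. This is where bipartiteness is used: the distances from $v_0$ of two adjacent vertices differ by exactly one, so every edge leaving level $r$ (resp. $r+1$) lands at level $r+1$ (resp. $r+2$), and $n\,n_{r+1}\ge D_r$ automatically. It is worth emphasizing that girth is needed only up to radius $r-1$ (to make the ball a tree and to fix $E$); no shorter-cycle obstruction at levels $\ge r$ is invoked, which is exactly why the argument remains uniform across all admissible configurations.
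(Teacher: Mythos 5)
Your proof is correct, and it is self-contained: girth $2r$ plus bipartiteness make the BFS levels $n_0,\dots,n_{r-1}$ exactly the Moore counts, the edge count $E=(n-1)n_{r-1}=n(m-1)^{\frac{r-1}{2}}(n-1)^{\frac{r-1}{2}}$ between levels $r-1$ and $r$ is exact, the quantities $D_r=mn_r-E$ and $D_{r+1}=nn_{r+1}-D_r$ are genuine nonnegative edge counts, and the cancellation yielding $n_r+n_{r+1}+n_{r+2}\geq \frac{m+n}{m}n_{r+1}+\frac{E}{m}$, combined with $n_{r+1}\geq 1$ and integrality, delivers the extra $\left\lceil n/m\right\rceil$. (You also correctly read the hypothesis ``$r$ even'' as a typo for $r$ odd.) One thing to be aware of: this paper does not prove the proposition at all; it is quoted from the earlier paper of Araujo-Pardo, Jajcay, Ramos-Rivera and Sz\H onyi, so there is no in-paper proof to match yours against. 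The closest arguments the paper does prove (the girth-$6$ theorem and the odd-$r$ theorem of Section 2) proceed by a genuinely different mechanism: they count only the smaller bipartition class $S$ via the even BFS levels, then use the \emph{global} edge count $|S|\,n=|L|\,m$ to force $|S|$ up to the next value making $|S|\,n$ divisible by $m$, and finally recover $|L|=|S|\,n/m$, so the entire divisibility defect gets multiplied by $(1+n/m)$. Your argument is purely local (one overflow edge at level $r$ pushed two levels down), which is exactly what is needed to obtain the stated inequalities, including the precise $+\left\lceil n/m\right\rceil$ term; but the paper's global route can be strictly stronger. For instance, for $(m,n;g)=(4,7;6)$ the proposition gives $29+\left\lceil 126/4\right\rceil+\left\lceil 7/4\right\rceil=63$, while the global divisibility argument forces $|S|\geq 24$ and hence at least $24\cdot\frac{11}{4}=66$ vertices. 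So the two approaches are not equivalent, and the paper's Section 2 results should be viewed as a sharpening of the proposition you proved rather than a re-derivation of it.
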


\section{Lower bounds and $bb$-cages of girth $6$ arising from Steiner Systems}
\label{girth6}
In this section, we generalize the results given in Theorems 16 and 17 of \cite{AJRS22} for $m=3$ and $n\equiv 2 \pmod 3$. We prove a general result for $n > m$ and 
$n\equiv -1 \pmod m$ using elementary divisibility conditions coming from counting edges in a bipartite graph.

\begin{theorem}
Let $3\leq m\leq n$ and $n\equiv -1 \pmod m$. Then the size of the $(m,n;6)$-bipartite biregular cage is at least 
$(\frac{n}{m}+1)(n+1)(m-1)$ and one can obtain a cage of this size by deleting a point and all the blocks through the deleted point, from a Steiner system $S(2,k=m,v=1+n(m-1)+m)$ (if it exists).
\end{theorem}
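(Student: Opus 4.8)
The plan is to treat the two bipartition classes asymmetrically and to combine the elementary Moore-type estimate with a divisibility constraint. Write $P$ for the class of vertices of degree $n$ and $L$ for the class of degree $m$, so that counting edges gives $n|P| = m|L|$ and hence the total order equals $|P| + |L| = |P|\,\frac{m+n}{m}$. Thus minimising the order is exactly minimising $|P|$, and the whole lower bound reduces to proving $|P| \ge (n+1)(m-1)$; substituting this into $|P|\,\frac{m+n}{m}$ produces the claimed value $\left(\frac{n}{m}+1\right)(n+1)(m-1)$.

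For the bound on $|P|$ I would first use that a bb-graph of girth $6$ is the incidence graph of a partial linear space: fixing $p_0 \in P$, its $n$ neighbours in $L$ each carry $m-1$ further neighbours in $P$, and girth $6$ forces these $n(m-1)$ vertices to be pairwise distinct and distinct from $p_0$, giving the crude estimate $|P| \ge 1 + n(m-1)$. The extra ingredient is arithmetic. Since $n \equiv -1 \pmod m$ any common divisor of $m$ and $n$ divides $n+1$ and $n$, hence divides $1$, so $\gcd(m,n)=1$; then $n|P| = m|L|$ forces $m \mid |P|$. Now $1 + n(m-1) \equiv 2 \pmod m$, so the smallest multiple of $m$ that is at least $1 + n(m-1)$ is $1 + n(m-1) + (m-2) = (n+1)(m-1)$, which is genuinely divisible by $m$ because $m \mid n+1$. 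This yields $|P| \ge (n+1)(m-1)$ and completes the lower bound.

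For the matching construction I would take a Steiner system $S(2,m,v)$ whose replication number is $r = n+1$, that is $v = 1 + (n+1)(m-1) = n(m-1)+m$, delete one point $P_0$ together with all blocks through it, and read off the residual incidence graph. Each surviving point $x$ loses exactly the unique block containing the pair $\{x,P_0\}$, so its degree drops from $r$ to $r-1 = n$, while every surviving block avoids $P_0$ and retains all $m$ of its points, keeping degree $m$. Counting gives $v-1 = (n+1)(m-1)$ surviving points and, using $b = \frac{v(n+1)}{m}$ and $v-m = n(m-1)$, exactly $b - r = \frac{n(n+1)(m-1)}{m}$ surviving blocks; their sum is $(n+1)(m-1)\,\frac{m+n}{m}$, matching the lower bound. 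It remains to confirm that the residual is connected and has girth exactly $6$: deletion cannot shorten cycles, the partial-linear-space property rules out $4$-cycles, and for the relevant range (where $n \ge 2m-1$ and $v$ is large) enough mutually non-collinear points survive to furnish a $6$-cycle, so the girth is neither smaller nor larger than $6$.

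The residual bookkeeping is routine; the conceptual heart is the lower bound, where the one nontrivial idea is that the coprimality $\gcd(m,n)=1$ upgrades the $\equiv 2 \pmod m$ Moore estimate to the next multiple of $m$. The genuine gap — exactly the one flagged by ``if it exists'' — is that I can only verify the necessary arithmetic conditions for $S(2,m,1+(n+1)(m-1))$, namely $v \equiv 1 \pmod{m-1}$ and $m \mid v(n+1)$, both of which are immediate from $m\mid n+1$; actual existence of the design is not guaranteed in general and must be imported (for instance from Wilson-type asymptotic results on admissible parameters). I expect this existence question, rather than any step in the counting, to be the main obstacle.
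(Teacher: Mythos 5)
Your proposal is correct and takes essentially the same route as the paper's proof: the identical Moore-plus-divisibility lower bound (your step ``$\gcd(m,n)=1$, hence $m \mid |P|$, hence $|P|\geq 1+n(m-1)+(m-2)=(n+1)(m-1)$'' is just a reformulation of the paper's congruence $x\equiv m-2 \pmod m$), followed by the identical construction of deleting one point and its $n+1$ blocks from a Steiner system $S(2,m,n(m-1)+m)$ and counting the residual points and blocks. Your extra remarks on connectivity and on the girth being exactly $6$ address details the paper leaves implicit, and your existence caveat matches the paper's appeal to Wilson's theorem.
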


\begin{proof} Let $\Gamma $ be our cage and denote the two classes by $S$ and $L$. Start from a vertex $s$ of the smaller bipartition class $S$. Then $s$ has $n$ neighbours in $L$ and there are $n(m-1)$ vertices in $S$ that are at distance 2 from $s$. Hence $|S|\ge 1+n(m-1)$. Observe that $(1+n(m-1))n$ is not divisible by $m$, hence $|S|=1+n(m-1)+x$ for some $x>0$. As $|S|n$ is the number of edges in $\Gamma$, it can also be expressed as $|L|m$, so $(1+n(m-1)+x)n\equiv 0 \pmod m$. This implies $(1+(-1)(m-1)+x)\cdot (-1)\equiv 0 \pmod m$ and so $x \equiv m-2 \pmod m$. The smallest such $x$ is $m-2$, showing that $|S|\ge 1+n(m-1) +m-2= (n+1)(m-1)$. The assertion on the size of $\Gamma $ follows immediately by counting the number of edges. In case of equality, the graph has $|S|=(n+1)(m-1)$ vertices of degree $n$ and $|L|=\frac{n(n+1)(m-1)}{m}$ vertices of degree $m$. 

For the construction, let us consider the vertices of $S$ as the points of an incidence structure and add a new point. The total number of points will be $v+1=1+(n+1)(m-1)$, and $m-1$ divides $v-1$. As $(n+1)\equiv 0$ $\pmod m$, we have that $m$ divides $n+1$, and consequently $m$ divides $v$. Then,  $m(m-1)$ divides $v(v-1)$, so the divisibility conditions for the existence of an $S(2,m,v+1)$ are satisfied. 

If there is a Steiner system with these parameters, then we have $v+1=1+(n+1)(m-1)$ elements and by the rules of the parameters on Steiner Systems, we have that the number of blocks in the design is $\frac{[(n+1)(m-1)+1](n+1)}{m}$ and any element is in $n+1$ blocks. Deleting a point and all the blocks through it, then the number of points will be $v$ and any point is in $n$ blocks (the blocks joining the actual point with the deleted one will be missing). So, the incidence graph of this structure will indeed have the required parameters: 

It has $(n+1)(m-1)$ points and $\frac{[(n+1)(m-1)+1](n+1)}{m}-(n+1)=\frac{n(n+1)(m-1)}{m}$ blocks. Then, the incidence graph is a $(m,n;6)$-bipartite biregular graph of order $(\frac{n}{m}+1)(n+1)(m-1)$, and it attains the previous lower bound. 
Note also that when $n$ is large enough compared to $m$, then the results of Wilson
\cite{Wil} guarantee the existence of such a design.  
\end{proof}

In the simplest case $m=3$ (for which the above results were shown in \cite{AJRS22}), we can say a little more. Actually, in this case, the cage mentioned in the above theorem must come from a Steiner triple system. 

\begin{theorem}
Let $n\equiv 2 \pmod 3$. Then the $(3,n;6)$-cage has size $2+2n+(2+2n)n/3$ and it has to be the incidence graph of a Steiner triple system on $2n+3$ points with one point and all the triples through it deleted.
\end{theorem}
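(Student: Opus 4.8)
The plan is to split the claim into a size part, which is essentially the preceding general theorem at $m=3$, and a structural part, which is the genuinely new content. For the size, I would first note that $n\equiv 2\pmod 3$ is precisely $n\equiv -1\pmod m$ with $m=3$, so the previous theorem gives the lower bound $(\tfrac{n}{3}+1)(n+1)\cdot 2=\tfrac{2(n+1)(n+3)}{3}$, and a direct simplification shows $2+2n+(2+2n)n/3=\tfrac{2(n+1)(n+3)}{3}$, so the two expressions agree. To see the bound is attained I would check that the relevant Steiner system always exists in this case: with $v=2n+3$ one has $v$ odd and, since $2n\equiv 1\pmod 3$, also $v\equiv 1\pmod 3$, whence $v\equiv 1\pmod 6$ and an $S(2,3,v)$ exists. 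The construction from the previous theorem then yields a cage of the stated order, and in contrast with the general case no ``if it exists'' caveat is needed.

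For the structural claim, let $\Gamma$ be any $(3,n;6)$-bb-cage, with point class $S$ (degree $n$) and block class $L$ (degree $3$). Using the edge count $|S|n=3|L|$ together with the now-known total order, I would first pin down $|S|=2n+2$ and $|L|=\tfrac{2n(n+1)}{3}$, so the split into classes is forced. Reading $S$ as points and $L$ as triples, girth $6$ forbids $4$-cycles, so any two points lie on at most one common block and no two blocks share two points: the incidence structure is a partial Steiner triple system in which every point lies on exactly $n$ triples. The crux is then a pair count. Each point $p$ lies on $n$ blocks, and the absence of $4$-cycles forces the $2n$ further points on these blocks to be distinct, so $p$ is collinear with exactly $2n$ points and non-collinear with exactly $(2n+1)-2n=1$ point. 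Hence the non-collinearity relation is a perfect matching $M$ on the $2n+2$ points, with $n+1$ edges; equivalently, the $\binom{2n+2}{2}=(n+1)(2n+1)$ pairs, minus the $3|L|=2n(n+1)$ covered pairs, leave exactly $n+1$ uncovered pairs, which are the edges of $M$.

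The final step completes $\Gamma$ to a Steiner system: I adjoin a new point $\infty$ and, for each edge $\{a,b\}\in M$, add the triple $\{a,b,\infty\}$. I would then verify that every pair of the $2n+3$ points is covered exactly once — a collinear original pair by its old block, a matching pair by its new triple, and each pair $\{q,\infty\}$ by the unique new triple arising from the matching edge at $q$ — so the result is an $S(2,3,2n+3)$, that is, a Steiner triple system. Deleting $\infty$ and the $n+1$ triples through it returns exactly the points $S$ and the blocks $L$, recovering $\Gamma$, which is the asserted description. I expect the delicate point to be the perfect-matching step, where the girth condition must be invoked carefully to guarantee that the $2n$ collinear partners of each point are genuinely distinct, so that ``non-collinear with exactly one point'' holds on the nose; once $M$ is established as a perfect matching, the completion to an STS and the subsequent deletion are routine bookkeeping.
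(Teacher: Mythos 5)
Your proposal is correct and takes essentially the same route as the paper: both derive $|S|=2n+2$ from the general $m=3$ lower bound, show each point has a unique non-collinear partner (so non-collinearity is a perfect matching), and complete the structure to a Steiner triple system on $2n+3$ points by adjoining $\infty$ together with the triples $\{s,s',\infty\}$, recovering $\Gamma$ by deletion. Your write-up merely fills in the counting details (distinctness of the $2n$ collinear partners, the exactly-once pair coverage) that the paper leaves implicit, and both handle existence via $2n+3\equiv 1\pmod 6$.
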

Before the proof let us remark that $n=3k+2$ implies that $v+1=2n+3=6k+4+3$ and there is a Steiner system 
on $v+1$ points, since it is congruent to 1 modulo 6.
\begin{proof}
We follow the proof of the previous theorem. It implies that the bipartition class $S$ has $2n+2$ points.
For each point $s$ in $S$, there is a unique point $s'$ which is at distance greater 
than $2$ from $s$. We also see that $s$ and every vertex of $S$ different from $s'$ 
will have a unique common neighbour in $L$.  Again, let us call the vertices of $S$ points, the vertices of $L$ blocks, and add a new point $\infty$ to $S$. The points $s$ and $s'$ will form a block together with $\infty$. This means that if we add the new point and these blocks, then we get a Steiner triple system. The original graph is the incidence graph of the structure obtained by removing $\infty$ and the blocks containing it.
\end{proof}

Notice that this does not mean that the graph is unique since there are many pairwise non-isomorphic Steiner systems but the construction scheme is the same (just the STS we start from can be different).

We can also use the above proofs to give a slightly more explicit formulation of Theorem 17 in \cite{AJRS22} in the case $g=2r$, $r$ odd.

\begin{theorem}
Let $g=2r$, $r$ odd, and let $2<m<n$ be the given degrees in a bipartite biregular graph. Let $x$ be the smallest non-negative integer for which $(1+n(m-1)+\ldots +n(m-1)^{r-2}(n-1)^{r-3}+x)\cdot n$ is divisible
by $m$. Then any bipartite, biregular cage with degrees $m$ and $n$ contains at least 
$((1+n(m-1)+\ldots +n(m-1)^{r-2}(n-1)^{r-3}+x))\cdot (1+n/m)$ vertices.
\end{theorem}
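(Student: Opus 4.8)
The plan is to reuse, almost verbatim, the counting-and-divisibility scheme from the proof of the girth-$6$ theorem, now carrying the Moore-tree enumeration all the way out to distance $r-1$. Write $S$ for the bipartition class whose vertices have degree $n$ and $L$ for the class of degree $m$. Counting the edges of $\Gamma$ from the two sides gives $|S|\,n=|L|\,m$; since $n>m$ this already shows that $S$ is the smaller class and that $|L|=|S|\,n/m$, which is the relation I will exploit at the end.

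First I would fix a vertex $s\in S$ and grow the breadth-first tree rooted at $s$. Because $\Gamma$ has girth $2r$, two distinct paths of length at most $r-1$ issuing from $s$ cannot end at a common vertex — otherwise they would close up into a cycle shorter than $2r$ — so the ball of radius $r-1$ about $s$ is a genuine tree and its levels may be counted exactly as in the Moore tree. Level $0$ is $\{s\}$, and for $j\geq 1$ the even level $2j$ contributes $n(m-1)^{j}(n-1)^{j-1}$ vertices, all lying in $S$. Since $r$ is odd, the last even level inside the tree is $r-1$, and adding up the $S$-contributions yields
\[
|S|\ \geq\ 1+\sum_{j=1}^{(r-1)/2} n(m-1)^{j}(n-1)^{j-1}\ =:\ T,
\]
which is precisely the sum in the statement, its final term $n(m-1)^{(r-1)/2}(n-1)^{(r-3)/2}$ recording the vertices at distance exactly $r-1$.

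Next I would apply the divisibility constraint, exactly as in the girth-$6$ argument. The edge count forces $m\mid |S|\,n$, i.e. $|S|\,n\equiv 0\pmod m$; together with $|S|\geq T$ this makes the smallest admissible value of $|S|$ equal to $T+x$, where $x$ is the least non-negative integer with $(T+x)\,n\equiv 0\pmod m$ — the very $x$ fixed in the statement (it exists and is $<m$, and reduces to $x=m-2$ in the girth-$6$ case $r=3$, $n\equiv-1\pmod m$). Using $|L|=|S|\,n/m$, the order is then
\[
|V(\Gamma)|\ =\ |S|+|L|\ =\ |S|\Bigl(1+\tfrac{n}{m}\Bigr)\ \geq\ (T+x)\Bigl(1+\tfrac{n}{m}\Bigr),
\]
which is the asserted lower bound; note that $(T+x)\,n/m$ is an integer by the choice of $x$, so the right-hand side is itself an integer.

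The argument is essentially routine once the tree count is set up, being the direct generalization of the girth-$6$ proof. The only steps that genuinely need care are the justification that the radius-$(r-1)$ ball is a tree (this is exactly where the girth hypothesis $g=2r$ enters) and the parity bookkeeping that the $S$-levels are precisely $0,2,\dots,r-1$; everything else is edge-counting and a congruence. Specializing to $r=3$, where $T=1+n(m-1)$, recovers the first theorem and serves as a consistency check.
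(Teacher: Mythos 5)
Your proof is correct and follows essentially the same route as the paper's: count the vertices of the smaller (degree-$n$) class at even distances less than $r$ using the girth-$2r$ tree property, then apply the edge-counting identity $|S|\,n=|L|\,m$ to get both the divisibility constraint defining $x$ and the factor $(1+n/m)$. The paper's own proof is just a terser version of exactly this argument, so your write-up merely fills in the details (tree property of the radius-$(r-1)$ ball, minimality of $x$) that the paper leaves implicit.
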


\begin{proof}
Consider the bipartite biregular cage $\Gamma$ with degrees $m$ and $n$.
Start from a vertex $v$ of the smaller bipartition class. This class will contain all those vertices which are at an even distance less than $r$ from $v$. The number of such vertices is $(1+n(m-1)+\ldots +n(m-1)^{r-2}(n-1)^{r-3})$, using the fact that the vertices of the smaller bipartition class have degree $n$, and those of the larger class have degree $m$. Hence the number of vertices in the smaller bipartition class is $(1+n(m-1)+\ldots +n(m-1)^{r-2}(n-1)^{r-3}+x)$ for some non-negative integer $x$. Counting the edges gives the divisibility condition and the lower bound in the theorem immediately.
\end{proof}

\section{Families of {\emph{bb-cages}} of girth $g=2r$ with even $r\geq 4$}\label{girth8}
Throughout this section, we assume that $r\geq 4$ even. 
We generalize some constructions given in \cite{AJRS22} for girth $2r.$ In particular, 
we present new families of graphs of girths $8$ and $12$ with small excess.

We divide this section into two parts depending of the technique that we use in each of them. 
\subsection{Constructions by deleting vertices and edges of the incidence graphs of generalized polygons.}

The following description is also given in \cite{AJRS22} and \cite{FilRamRivJaj19}:

The existence of bipartite biregular $(m,n;2r)$-graphs, with $r\geq 4$ even, in relation 
to the lower bound  (\ref{genMoore-even}) on $B(m,n;2r)$ is based on the idea of the existence of a tree, denoted by ${\cal{T}}_{uv}$. 
It has $B(m,n;2r)$ vertiices and is called therein the {\em{Moore tree}}. 
It is necessarily contained in any bipartite biregular $(m,n;2r)$-graph
and consists of trees ${\cal{T}}_u$ and ${\cal{T}}_v$ attached to the vertices $u$ 
and $v$ of an edge $e= \{ u,v \} $. Each of the two trees consists of levels of vertices of varying degrees $n$ and $m$ and is of depth $r-1$ (one starting of a root of degree $n-1$ and one starting of a root of degree $m-1$), and the last level consisting of vertices of degree $1$. 
Thus, ${\cal{T}}_{uv}$ is the union of the edge $e=\{u,v\}$ and two disjoints trees ${\cal{T}}_u$ and ${\cal{T}}_v$, rooted in $e=\{u,v\}$, where $V({\cal{T}}_{uv})=V({\cal{T}}_u)\cup V({\cal{T}}_v)$ and $E({\cal{T}}_{uv})=E({\cal{T}}_u)\cup E({\cal{T}}_v)\cup \{ e \} $. A bipartite biregular $(m,n;2r)$-cage of order 
$B(m,n;2r)$ exists if and only if
the tree ${\cal{T}}_{uv}$ can be completed into an $(m,n;2r)$-graph by adding 
$n-1$ edges to each leaf of ${\cal{T}}_u$ connecting these leaves 
to the leaves of the tree ${\cal{T}}_v$ in such a way that each leave of 
${\cal{T}}_v$ ends up being of degree $m$, and no cycles shorter than $2r$ are introduced.

Using techniques introduced in the study of the $(k;g)$-cage problem (see for example \cite{AGMS07,ABGMV08,ABH10}), we obtain the following two theorems. 
\begin{theorem}\label{generalizationtoptree}
Suppose that there exists a generalized $r$-gon $\mathcal{G}$ of order $(m,n)$. 
Let $m_1\leq n_1$ be integers satisfying the inequalities $m_1\leq m,$ $n_1\leq n$ 
and 
$$(m_1+n_1)n^{\frac{r}{2}-1}m^{\frac{r}{2}-1}<B(m_1,n_1;2(r+1)).$$
Then 
there exist $(m_1,n_1;2r)$-bb graphs of order
$(m_1+n_1)n^{\frac{r}{2}-1}m^{\frac{r}{2}-1}.$
\end{theorem}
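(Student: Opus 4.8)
The plan is to realize the desired graph as an induced subgraph of the incidence graph $\G$ of the given generalized $r$-gon $\mathcal{G}$, using the \emph{opposition relation}. Since $\mathcal{G}$ has order $(m,n)$, by Theorem~\ref{g-g} its incidence graph $\G$ is bipartite biregular of girth $2r$ and diameter $r$, with the point-side $\mathcal{P}$ of degree $n+1$ and the line-side $\mathcal{L}$ of degree $m+1$. I would fix a flag $(P_0,\ell_0)$ with $P_0\in\ell_0$, let $S_0$ be the set of points at distance $r$ from $P_0$ and $L_0$ the set of lines at distance $r$ from $\ell_0$ (the points opposite $P_0$ and the lines opposite $\ell_0$). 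In a generalized $r$-gon of order $(m,n)$ one has $|S_0|=m^{r/2}n^{r/2-1}=m\,(mn)^{r/2-1}$ and dually $|L_0|=n^{r/2}m^{r/2-1}=n\,(mn)^{r/2-1}$. The first candidate graph $H_0$ is the subgraph of $\G$ induced on $S_0\cup L_0$.

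First I would settle the base case $m_1=m,\ n_1=n$ by computing degrees in $H_0$. A point $P\in S_0$ lies at distance $r-1$ (the maximal point–line distance) from $\ell_0$, and since $r-1<r$ there is a \emph{unique} geodesic from $P$ to $\ell_0$; its first edge singles out the one line through $P$ that is not opposite $\ell_0$, so exactly $n$ of the $n+1$ lines through $P$ lie in $L_0$. Hence every $P\in S_0$ has degree $n$ in $H_0$, and dually every $\ell\in L_0$ has degree $m$. Thus $H_0$ is an $(m,n)$-biregular bipartite graph of order $|S_0|+|L_0|=(m+n)(mn)^{r/2-1}$, and being a subgraph of $\G$ it has girth at least $2r$.

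For general $m_1\le m,\ n_1\le n$ I would pass from $H_0$ to the target graph by \emph{deleting vertices and edges}: one must discard $(m-m_1)(mn)^{r/2-1}$ opposite points and $(n-n_1)(mn)^{r/2-1}$ opposite lines so that every surviving point loses exactly $n-n_1$ of its lines and every surviving line loses exactly $m-m_1$ of its points. If such a uniform thinning exists, the resulting graph $H$ is $(m_1,n_1)$-biregular of order $m_1(mn)^{r/2-1}+n_1(mn)^{r/2-1}=(m_1+n_1)(mn)^{r/2-1}$, and, still being a subgraph of $\G$, it has girth at least $2r$. This thinning is where I expect the real difficulty to lie: it amounts to locating inside the opposition graph a sub-collection of points and lines that is simultaneously regular from both sides and of exactly the prescribed cardinalities. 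I would try to produce it geometrically (e.g.\ by intersecting $S_0,L_0$ with the opposite sets of a second, suitably chosen flag, or with a sub-configuration of $\mathcal{G}$) and, lacking an explicit description, fall back on a degree-constrained extraction (a Hall- or K\"onig-type argument removing an $(m-m_1,\,n-n_1)$-regular subgraph). I would also verify that $H$ is connected, so that it is a genuine bb-graph.

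Finally, the hypothesis $(m_1+n_1)n^{r/2-1}m^{r/2-1}<B(m_1,n_1;2(r+1))$ is precisely what pins the girth down to $2r$ rather than something larger. Indeed, any bipartite biregular graph with degrees $m_1,n_1$ and girth at least $2(r+1)$ must contain the corresponding breadth-first (Moore) tree and hence have at least $B(m_1,n_1;2(r+1))$ vertices; since $H$ has strictly fewer, its girth cannot reach $2(r+1)$. Combined with $\mathrm{girth}(H)\ge 2r$ and the fact that the girth is even, this forces $\mathrm{girth}(H)=2r$. Therefore $H$ is an $(m_1,n_1;2r)$-bb graph of order $(m_1+n_1)n^{r/2-1}m^{r/2-1}$, as claimed.
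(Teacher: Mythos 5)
Your construction of $H_0$ is correct, and in fact it is the paper's own graph in disguise: for the flag $(P_0,\ell_0)$, corresponding to an edge $uv$ of the Levi graph, the points opposite $P_0$ are exactly the leaves of the half Moore tree ${\cal T}_v$ and the lines opposite $\ell_0$ are exactly the leaves of ${\cal T}_u$; your unique-geodesic degree count and your final girth-pinning step (order below $B(m_1,n_1;2(r+1))$ forces the even girth down to exactly $2r$) are also the paper's. But this only settles the case $m_1=m$, $n_1=n$, which is the previously known result (\cite[Theorem 6]{AJRS22}, as the paper notes). For the actual content of the theorem, $m_1<m$ or $n_1<n$, your write-up is explicitly conditional (``if such a uniform thinning exists''), and the fallbacks you offer do not close it: a Hall/K\"onig-type argument extracts a spanning regular set of \emph{edges}, whereas what you need is a set of \emph{vertices} to delete such that every surviving point has exactly $n-n_1$ neighbours among the deleted lines and every surviving line exactly $m-m_1$ neighbours among the deleted points --- a rigid ``perfect deletion'' condition that matching theory does not provide --- and the ``second flag'' idea is not developed at all. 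So the theorem remains unproved in your proposal.

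The missing idea is a canonical partition of your sets $S_0$ and $L_0$, which is exactly what the paper uses. Each $P\in S_0$ is at distance $r-1$ from $\ell_0$, so its unique geodesic to $\ell_0$ ends at one of the $m$ points $v_1,\dots,v_m$ of $\ell_0$ other than $P_0$; this partitions $S_0$ into $m$ classes $D_{r-2}^{r-1}(v,v_i)$, each of size $(mn)^{r/2-1}$, and dually $L_0$ splits into $n$ classes $D_{r-2}^{r-1}(u,u_j)$ indexed by the lines $u_j\neq\ell_0$ through $P_0$. The crucial property --- provable by the same unique-geodesic argument you already use: for $\ell\in L_0$ one has $d(\ell,v_i)=r-1$, and the second vertex of the unique geodesic from $\ell$ to $v_i$ is the \emph{unique} point of $\ell$ lying in the class of $v_i$ --- is that every point of $S_0$ has exactly one neighbour of $H_0$ in each of the $n$ classes of $L_0$, and every line of $L_0$ has exactly one neighbour in each of the $m$ classes of $S_0$. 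Consequently the subgraph induced on the union of \emph{any} $m_1$ point-classes and \emph{any} $n_1$ line-classes is automatically $(m_1,n_1)$-biregular of order $(m_1+n_1)(mn)^{r/2-1}$, with girth pinned to $2r$ by your final argument. This explicit class structure is the ``uniform thinning'' you were hoping for; without it (or some substitute) your proof only covers the already-known boundary case.
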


\begin{proof}
Let $\Gamma$ be the Levi graph of $\mathcal{G}$. Then, by Corollary \ref{cor-3}, 
$\Gamma $ is an $(m+1,n+1;2r)$-bb Moore cage. Let $uv$ be any edge of $\Gamma $. As argued above, $\Gamma$ contains the Moore tree ${\cal{T}}_{uv}$ with the additional edges connecting the leaves of the two subtrees 
${\cal{T}}_u$ and ${\cal{T}}_v$. We may assume that $\mathrm{deg}(v)=m$ and $\mathrm{deg}(u)=n.$
  Let $D_1(v)=\{ v_1,v_2,\dots ,v_m\}$ and  $D_1(u)=\{ u_1,u_2,\dots ,u_n\}.$ 
 Notice that each vertex in $D_{r-1}^r(v,u)$ has exactly one neighbour in each $D_{r-2}^{r-1}(v,v_i)$, for $1\leq i \leq m$, and analogously, each vertex in $D_{r-1}^r(u,v)$ has exactly one neighbour in each $D_{r-2}^{r-1}(u,u_j)$ for for $1\leq j \leq n$. 
 
Moreover, there exists a matching between any two sets of vertices of  $D_{r-2}^{r-1}(v,v_i)$ and  $D_{r-2}^{r-1}(u,u_j)$, for all 
$1\leq i \leq m$ and $1\leq j \leq n$. 
  
Let $m_1\leq m$, and $n_1\leq n$ and take $G_{m_1,n_1}$ the induced graph of the set of vertices $D_{r-2}^{r-1}(v,v_i)\cup D_{r-2}^{r-1}(u,u_j)$, for all 
  $1\leq i \leq m_1$ and $1\leq j \leq n_1$.
  Then $G_{m_1,n_1}$ is a bipartite biregular graph with degrees $m_1$ and $n_1$ 
and its order is
  $$m_1n^{\frac{r}{2}-1}m^{\frac{r}{2}-1}+n_1n^{\frac{r}{2}-1}m^{\frac{r}{2}-1}=(m_1+n_1)n^{\frac{r}{2}-1}m^{\frac{r}{2}-1}.$$
  
Clearly, the girth does not decrease, because we only delete vertices and edges. On the other hand, the order of $G_{m_1,n_1}$ is less than $B(m_1,n_1;2(r+1)),$ hence the girth is less than $2(r+1),$ it is even, so it must be $2r.$

The proof is complete.
\end{proof}

Notice that if $m_1=m$ and $n_1=n$ we obtain  \cite[Theorem 6]{AJRS22}.

\begin{theorem}\label{last-theorem}
Suppose that there exists a generalized $r$-gon of order $(m,n)$. Then 
there exist bipartite biregular graphs with parameters 
$(m,n+1;2r)$ and $(m+1,n;2r)$ of order 
$$n^{\frac{r}{2}-1}m^{\frac{r}{2}-1}( m +n+1).$$

In particular, if $m=n$ we obtain graphs of order $n^{r-2}(2n+1).$
\end{theorem}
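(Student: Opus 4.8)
The plan is to imitate the construction in the proof of Theorem~\ref{generalizationtoptree}, but instead of trimming both trees symmetrically, I would trim only \emph{one} of the two subtrees while keeping the other at full width. Concretely, let $\Gamma$ be the Levi graph of the generalized $r$-gon of order $(m,n)$; by Corollary~\ref{cor-3} it is an $(m+1,n+1;2r)$-bb Moore cage. Fix an edge $uv$ with $\mathrm{deg}(v)=m+1$ and $\mathrm{deg}(u)=n+1$, giving $D_1(v)=\{v_1,\dots,v_{m+1}\}$ and $D_1(u)=\{u_1,\dots,u_{n+1}\}$. As recorded in the previous proof, each vertex of $D_{r-1}^r(v,u)$ has exactly one neighbour in each $D_{r-2}^{r-1}(v,v_i)$, each vertex of $D_{r-1}^r(u,v)$ has exactly one neighbour in each $D_{r-2}^{r-1}(u,u_j)$, and there is a matching between any pair of these outermost level-sets.

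To get the $(m,n+1;2r)$ graph, I would take the induced subgraph $G$ on
$$\left(\bigcup_{i=1}^{m} D_{r-2}^{r-1}(v,v_i)\right)\cup\left(\bigcup_{j=1}^{n+1} D_{r-2}^{r-1}(u,u_j)\right),$$
i.e. keep all $n+1$ branches on the $u$-side but only $m$ of the $m+1$ branches on the $v$-side. The key step is the degree bookkeeping: a vertex on the $v$-side lies at the end of the full set of $n+1$ matchings to the $u$-side branches, so it retains degree $n+1$, while a vertex on the $u$-side now meets only $m$ of the surviving $v$-branches, so its degree drops from $m+1$ to $m$. Hence $G$ is bipartite biregular with degrees $m$ and $n+1$. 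The order is obtained by counting leaves: each level-set $D_{r-2}^{r-1}$ has $n^{\frac r2-1}m^{\frac r2-1}$ vertices, so the total is $(m+(n+1))\,n^{\frac r2-1}m^{\frac r2-1}=n^{\frac r2-1}m^{\frac r2-1}(m+n+1)$. The symmetric choice, keeping all $m+1$ branches on the $v$-side and only $n$ on the $u$-side, yields the $(m+1,n;2r)$ graph of the same order.

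For the girth, I would argue exactly as before: deleting vertices and edges cannot decrease the girth, so $g\ge 2r$; and since $g$ is even it suffices to rule out $g\ge 2r+2$. Here I expect the only point needing care—the main obstacle—is verifying that the resulting order is genuinely below $B(m,n+1;2(r+1))$ (resp. $B(m+1,n;2(r+1))$), which is what forces $g=2r$ rather than something larger. Unlike Theorem~\ref{generalizationtoptree}, this inequality is not an explicit hypothesis, so I would need to check it holds automatically from the generalized Moore bound~(\ref{genMoore-even}); the numerator of $B$ grows like the $(r/2)$-th power of $(m-1)(n)$, comfortably exceeding $n^{\frac r2-1}m^{\frac r2-1}(m+n+1)$ for the relevant parameter ranges, but this comparison is the one step I would write out rather than assert. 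Finally, the specialization $m=n$ is a direct substitution giving order $n^{r-2}(2n+1)$, completing the proof.
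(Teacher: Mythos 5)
Your trimming is not the paper's (there, three full levels $r-3,r-2,r-1$ of $\mathcal{T}_u$ are kept, together with the two deepest levels of $m-1$ of the $m$ branches of $\mathcal{T}_v$), and your vertex count is right; but your degree bookkeeping rests on a false uniformity, and this is a genuine gap. The $m+1$ sets $D_{r-2}^{r-1}(v,v_i)$ and the $n+1$ sets $D_{r-2}^{r-1}(u,u_j)$ are \emph{not} interchangeable leaf-sets pairwise joined by matchings: that description is correct only for the branches through the neighbours $v_i\neq u$ and $u_j\neq v$. The exceptional set $D_{r-2}^{r-1}(u,v)=\Gamma_{r-1}(u)\cap\Gamma_{r-2}(v)$ is the entire level $r-2$ of the tree $\mathcal{T}_v$; its edges to the deepest level of $\mathcal{T}_v$ are the parent--child edges of that tree, i.e.\ a disjoint union of $m$-stars, so each of its vertices has all $m$ of its kept neighbours inside one single $v$-branch and none in the others, and there is no matching between it and any $D_{r-2}^{r-1}(v,v_i)$. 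Your sentence ``a vertex on the $u$-side meets only $m$ of the surviving $v$-branches'' is therefore false precisely for these vertices, even though their degree does come out to be $m$ (for the different reason that their $m$ tree-children are kept while their parent is deleted).

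This imprecision matters because correctness hinges on a choice you never make: which of the $m+1$ $v$-branches is dropped. The construction works only if the dropped branch is the one through $u$, namely $D_{r-2}^{r-1}(v,u)$ (= level $r-2$ of $\mathcal{T}_u$). If instead you drop a proper branch $D_{r-2}^{r-1}(v,v_i)$ and keep $D_{r-2}^{r-1}(v,u)$, the resulting graph is not biregular at all: every vertex of $D_{r-2}^{r-1}(v,u)$ keeps its $n$ tree-children in level $r-1$ of $\mathcal{T}_u$ but loses its parent, so it has degree $n$, while the vertices of $D_{r-2}^{r-1}(u,v)$ lying above the dropped branch lose all their children and have degree $0$. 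Since your labelling $D_1(v)=\{v_1,\dots,v_{m+1}\}$ never identifies which $v_i$ equals $u$, one of the two readings of your construction fails. The fix is short: keep $\bigcup_{v_i\neq u}D_{r-2}^{r-1}(v,v_i)\cup\bigcup_{j=1}^{n+1}D_{r-2}^{r-1}(u,u_j)$, use the matching argument for the proper branches, and check the vertices of $D_{r-2}^{r-1}(u,v)$ separately via their tree-children. With that repair your graph (a genuinely different subset of $\Gamma$ than the paper's $F$, of the same order) is indeed an $(m,n+1;2r)$-bb-graph; your remaining steps --- girth at least $2r$ by the subgraph property, and girth exactly $2r$ by comparison with $B(m,n+1;2(r+1))$ --- coincide with the paper's, which asserts that inequality without proof just as you flag it.
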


\begin{proof}
We construct a bipartite biregular graph with parameters $(m,n+1;2r).$ The construction of the other graph is similar. 

In this proof, we also start with the Moore tree of a $(m+1,n+1;2r)$-bb cage $\Gamma$ with $\mathrm{deg}(v)=m$ and $\mathrm{deg}(v)=n$. Moreover, let $D_1(v)=\{ v_1,v_2,\dots ,v_m\}.$ 
Notice that, each vertex of $D_{r-1}^r(v,u)$ has exactly one neighbour in $D_{r-2}^{r-1}(v,v_1),$ because
$\Gamma $ is the Levi graph of a generalized polygon. 

Let
$$F=\bigcup _{j=1}^3  D_{r+1-j}^{r-j}(u,v)\bigcup _{i=2}^m D_{r-2}^{r-1}(v,v_i)\bigcup _{i=2}^m D_{r-3}^{r-2}(v,v_i)$$
denote a subset of vertices of ${\cal{T}}_{uv}.$
We construct a graph $\Gamma _F$ by deleting the edges and vertices of 
${\cal{T}}_{uv}$ other than the vertices in $F$ 
and the edges connecting them. We claim that $\Gamma _F$ is an $(m,n+1;2r)$-bb graph. It is sufficient to note that all vertices of $\Gamma$ being in the set 
$$ D_{r}^{r-1}(u,v)\cup D_{r-2}^{r-3}(u,v)\bigcup _{i=2}^m D_{r-3}^{r-2}(v,w_i),$$
have degree $m$ in $\Gamma _F$, whereas all vertices of $\Gamma$ being in the set 
$$ D_{r-1}^{r-2}(u,v)\bigcup _{i=2}^m D_{r-2}^{r-1}(v,w_i)$$
have yet degree $n+1$. 

Moreover, as we remove vertices and edges, the girth of the new graph is at least $2r$ and, by the lower bound given also in \cite{FilRamRivJaj19}\ for $(m,n+1;2(r+1))-bb\ graphs$ it does not have girth equal to $2(r+1)$. Consequently, the graph has girth exactly $2r$.

Finally, the order of $\Gamma _F$ is equal to \\
$n^{\frac{r}{2}-1}m^{\frac{r}{2}-2} +n^{\frac{r}{2}-1}m^{\frac{r}{2}-1}+n^{\frac{r}{2}}m^{\frac{r}{2}-1}+(m-1)[m^{\frac{r}{2}-2}n^{\frac{r}{2}-1}+m^{\frac{r}{2}-1}n^{\frac{r}{2}-1}] = \\
n^{\frac{r}{2}-1}m^{\frac{r}{2}-1}( m +n+1).$


\end{proof}

For $r=4$ the conditions of Theorem \ref{generalizationtoptree} can be simplified.
\begin{theorem}\label{generalizationtop_r=4}
Let $4\leq m\leq n$ be integers and suppose that there exists a generalized quadrangle 
$\mathcal{G}$ of order $(m,n)$. 
Let $m_1\leq n_1$ be integers satisfying the inequalities $m_1\leq m$ and $n_1\leq n$ 
Then 
there exist $(m_1,n_1;8)$-bb graphs of order
$(m_1+n_1)mn.$
\end{theorem}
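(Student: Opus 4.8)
My plan is to keep the construction from the proof of Theorem~\ref{generalizationtoptree} unchanged and to replace its numerical hypothesis by a direct geometric argument. First I would let $\Gamma$ be the Levi graph of $\mathcal{G}$, which by Corollary~\ref{cor-3} is an $(m+1,n+1;8)$-bb Moore cage; fix an edge $uv$ with $v$ a line (so $\deg v=m+1$) and $u$ a point (so $\deg u=n+1$), let $v_1,\dots,v_m$ be the points of $v$ other than $u$ and $u_1,\dots,u_n$ the lines through $u$ other than $v$, and let $G_{m_1,n_1}$ be the subgraph induced on $\bigcup_{i\le m_1}D_2^3(v,v_i)\cup\bigcup_{j\le n_1}D_2^3(u,u_j)$. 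Exactly as in Theorem~\ref{generalizationtoptree} specialised to $r=4$, this graph is bipartite biregular with degrees $m_1$ and $n_1$, it has order $(m_1+n_1)mn$, and, being a subgraph of $\Gamma$, its girth is at least $8$ and even. Thus biregularity, the order, and the bound girth $\ge 8$ are inherited for free, and the whole content of the theorem is the assertion that for $r=4$ the inequality $(m_1+n_1)mn<B(m_1,n_1;10)$ of Theorem~\ref{generalizationtoptree} may simply be dropped, i.e. that the girth equals $8$.

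The order-comparison step of Theorem~\ref{generalizationtoptree} is genuinely unavailable here: when $m_1,n_1$ are small and the quadrangle is large, $(m_1+n_1)mn$ exceeds $B(m_1,n_1;10)$, so one cannot rule out girth $\ge 10$ merely by counting vertices. Instead I would prove girth $=8$ by exhibiting a single $8$-cycle, i.e. an ordinary quadrangle of $\mathcal{G}$ all of whose vertices survive the pruning. The surviving vertices have a clean description from the GQ axioms: $D_2^3(v,v_i)$ is exactly the set of points off $v$ whose unique neighbour on the line $v$ is $v_i$, and $D_2^3(u,u_j)$ is exactly the set of lines not through $u$ meeting $u_j$ in a point different from $u$. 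Assuming, as we may, that $m_1,n_1\ge 2$, I would fix $v_1,v_2\in v$ and $u_1,u_2\ni u$ and look for four points $P_1,P_2,P_3,P_4$ and four lines $L_1,L_2,L_3,L_4$ with $P_k\in L_{k-1}\cap L_k$ (indices mod $4$), such that the unique $v$-neighbour of each $P_k$ lies in $\{v_1,v_2\}$, no $P_k$ is on $v$, each $L_k$ meets $\{u_1,u_2\}$ off $u$, and no $L_k$ passes through $u$. Any such quadrangle is an $8$-cycle contained in $G_{2,2}\subseteq G_{m_1,n_1}$, and it forces the girth down to $8$.

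The hard part is the \emph{closure} of this quadrangle, that is, meeting all eight incidence and leaf-membership constraints simultaneously; the individual sides are cheap (for instance, choosing $x\in u_1\setminus\{u\}$, the points $x$ and $v_1$ are non-collinear, so among their $n+1\ge 5$ common neighbours all but $u$ are leaves under $v_1$ joined to $x$ by a leaf-line under $u_1$), but forcing the last side $L_4=P_4P_1$ to be incident and to land again under $u_1$ or $u_2$ is the real obstacle. I would resolve it in one of two ways. Either (i) by a counting argument: restricting each point to be collinear with $v_1$ or $v_2$, rather than with any of the $m+1$ points of $v$, and each line to meet $u_1$ or $u_2$, multiplies the count of ordinary quadrangles by a positive factor of order $(2/(m+1))^{4}(2/(n+1))^{4}$, which stays bounded away from $0$ once $\mathcal{G}$ is thick, so an averaging/inclusion–exclusion estimate using the room given by $m\ge 4$ yields a quadrangle meeting all the constraints; or (ii) by an explicit construction in a coordinatised model of $\mathcal{G}$, building the quadrangle side by side and checking that each forced point or line avoids the excluded elements $u,v,v_1,v_2,u_1,u_2$, which is precisely where the thickness hypothesis $m\ge 4$ is spent. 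Once a single such quadrangle is produced, the girth of $G_{m_1,n_1}$ is at most $8$, hence exactly $8$; so $G_{m_1,n_1}$ is a genuine $(m_1,n_1;8)$-bb graph of order $(m_1+n_1)mn$, and the inequality hypothesis of Theorem~\ref{generalizationtoptree} is indeed superfluous when $r=4$.
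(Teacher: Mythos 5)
Your reduction is correct as far as it goes: the construction, the order count $(m_1+n_1)mn$, the biregularity, and the bound girth $\geq 8$ all carry over from Theorem~\ref{generalizationtoptree}, and you are right that the counting hypothesis of that theorem is genuinely unavailable here, so everything hinges on exhibiting a single $8$-cycle inside $G_{m_1,n_1}$. But that is exactly the step you never prove, and neither of your two proposed routes closes it. Route (i) is a heuristic, not an argument: the factor $(2/(m+1))^4(2/(n+1))^4$ tacitly assumes that ordinary quadrangles are equidistributed over the possible anchor patterns, which is precisely the statement in question; no inclusion--exclusion is actually set up, and it is unclear how the hypothesis $m\geq 4$ would enter such an estimate. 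Route (ii) is unavailable in the stated generality: the theorem concerns an \emph{arbitrary} generalized quadrangle of order $(m,n)$, for which no classification or canonical coordinatisation exists. Moreover, your target is strictly harder than what is needed: adjacent vertices of an ordinary quadrangle must have distinct anchors on $v$ (a common anchor would create a triangle), so asking the quadrangle to lie in $G_{2,2}$ forces a perfectly alternating configuration ($P_1,P_3$ anchored at $v_1$, $P_2,P_4$ at $v_2$, and dually for the lines), a delicate closure problem the paper never has to face.

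The paper sidesteps closure by reversing the order of choices, exploiting precisely the freedom you give away when you fix the edge $uv$ in advance. It first fixes \emph{any} ordinary quadrangle $\mathcal{P}$ with vertices $A,B,C,D$ (one exists by thickness), and only then chooses the root edge to avoid it: at most $2\left((1+m(n+1))+(1+m(n-1))\right)=4(mn+1)$ points are collinear with some vertex of $\mathcal{P}$, and $4(mn+1)<(m+1)(mn+1)$ because $m\geq 4$, so there is a point $E$ collinear with no vertex of $\mathcal{P}$; among the $n+1>4$ lines through $E$, at most four meet a side of $\mathcal{P}$ (exactly one line through $E$ meets each side), so some line $e$ through $E$ misses all four sides. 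Rooting the Moore tree at the edge corresponding to $(E,e)$ then makes every vertex of $\mathcal{P}$ a surviving point and every side a surviving line: each vertex, being non-collinear with $E$, has its unique anchor on $e$ among the $v_i$, and each side, missing $e$ and avoiding $E$, is anchored at some $u_j$. After relabelling at most four of the $v_i$ and four of the $u_j$, the $8$-cycle $\mathcal{P}$ lies in $G_{m_1,n_1}$, so the girth is exactly $8$. This is where $4\leq m\leq n$ is really spent --- on finding $(E,e)$ --- not on ``thickness room'' for a side-by-side construction, as you guessed. (Note, incidentally, that this argument anchors $\mathcal{P}$ at up to four distinct $v_i$ and $u_j$, so it tacitly needs $m_1,n_1\geq 4$; a proof of your two-anchor configuration would be exactly what is required to cover smaller $m_1,n_1$, but as it stands you have not supplied one.)
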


\begin{proof}
Let $A,\, B,\, C,\, D$ be the vertices of a proper quadrangle $\mathcal{P}$ in 
$\mathcal{G}$. Then there are 
$$2\left( (1+m(n+1))+(1+m(n-1))\right) =4(mn+1)$$
points in $\mathcal{G}$ which are collinear with at least one vertex of $\mathcal{P}$.
As $4(mn+1)<(m+1)(mn+1)$,
there exists a pont $E$ which is not collinear with any vertex of $\mathcal{P}$.
The number of lines through $E$ is $n+1>4,$ hence we can choose a line $e$
through $E$ such that $e$ does not intersect any side of  $\mathcal{P}$.

Now we can repeat the proof of Theorem \ref{generalizationtoptree} in such a way that instead of an arbitrary edge $uv$ of the Moore tree, we choose the edge that corresponds to the incident point-line pair $(E,e)$.

\end{proof}
Table 1 contains 
the parameters of the known finite generalized $2r$-gons of order $(m,n)$ with $1<m\leq n$ and $r\geq 4$, the order of the graphs constructed in Theorem 
\ref{last-theorem}, the corresponding bipartite Moore bound and the excess of the
graph ($q$ denotes an arbitrary prime power).

\begin{center}
\begin{tabular}{|ccc|cc|c|}
\hline
 $m+1$	&   $n+1$	 & $2r$ &  $\frac{|F|}{m+n+1}$  & 
$\frac{B(m,n+1;2r)}{m+n+1}$ & excess \\
\hline
\hline
$q+1$ & $q+1$ & 8 & $q^2$ &  $q^2-q+1$ & $(2q+1)(q-1)$ \\ 
$q+1$ & $q^2+1$ & 8 & $q^3$ &  $q^3-q^2+1$ & $(q^2+q+1)(q^2-1)$ \\  
$q^2+1$ & $q^3+1$ & 8 & $q^5$ &  $q^5-q^3+1$ & $(q^3+q^2+1)(q^3-1)$ \\ 
$q$ & $q+2$ & 8 & $q^2-1$ &  $q^2-q$ & $(2q+1)(q-1)$ \\ 
$q+1$ & $q+1$ & 12 & $q^4$ &  $\frac{(q^2-q)^3-1}{q^2-q-1}$ & $(2q+1)(q-1)$ \\ 
$q+1$ & $q^3+1$ & 12 & $q^8$ &  $\frac{(q^4-q^3)^3-1}{q^4-q^3-1}$ & 
$\approx 2q^{10}$ \\
$q+1$ & $q^2+1$ & 16 & $q^9$ & $\frac{(q^3-q^2)^4-1}{q^3-q^2-1}$ &  
$\approx 3q^{11}$ \\  
\hline
\end{tabular}
\end{center}


Using the generalized quadrangle $T_2(O)$ we can make the previous constructions somewhat more explicit (for $r=4)$. We just use part of $T_2(O)$, namely we take a prime (slightly) larger than $n_1$. As in the construction of $T_2(O)$, we consider $\Pi =\mathrm{PG} (3,q)$ with its ideal plane $\Sigma$ and $O$ is an oval in $\Sigma$. Pick a line $\ell$ in $\Sigma $ disjoint from $O$. Finally, take $n_1$ points of $O$, and $m_1$ planes through $\ell$ in the affine part of $\Pi$. With this preparation, the construction is the following. (Note that $p+1$ plays the role of $m$ and $n$ in the previous theorems.)

\begin{proposition}
We define a bipartite graph $\Gamma$ with $V_1$ being the set of points on the union of the $m$ planes through $\ell$ and $V_2$ being the (affine) lines going to the $n$ points of $O$. Join a vertex of $V_1$ with a vertex of $V_2$ if the corresponding point and line are incident. Then $\Gamma$ is a bipartite biregular graph with degrees $m_1$ and $n_1$, of girth 8,  and $|V_1|=m_1p^2$ and $|V_2|=n_1p^2$.
\end{proposition}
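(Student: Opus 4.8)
The plan is to verify directly that the construction based on the generalized quadrangle $T_2(O)$ produces a graph with exactly the claimed degrees and girth. First I would recall the definition of $T_2(O)$: with $\Pi=\mathrm{PG}(3,q)$, ideal plane $\Sigma$, and oval $O\subset\Sigma$, the points of the geometry are the affine points of $\Pi$, and the lines are the affine lines meeting $\Sigma$ in a point of $O$. Two affine points are collinear precisely when the line joining them is extended to a point of $O$. The key structural fact I would use is that $T_2(O)$ is a generalized quadrangle of order $(q-1,q+1)$, so its incidence graph has girth $8$; our $\Gamma$ will be an induced subgraph of (a piece of) this incidence graph, so its girth is automatically at least $8$.

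Next I would establish the two degree conditions and the two vertex counts, which is the heart of the statement. For $|V_2|$: each chosen affine line $\ell'$ (one heading to a point of $O$) meets each of the $m$ planes through $\ell$ in exactly one affine point, so it is incident with exactly $m_1$ points of $V_1$, giving degree $m_1$ on the $V_2$ side; and there are $n_1$ choices of point of $O$, each carried by $p^2$ affine lines in a fixed parallel class through that ideal point, so $|V_2|=n_1p^2$. For $|V_1|$: each affine point $P$ on one of the $m_1$ planes lies on exactly $n_1$ of the selected affine lines, namely the lines through $P$ whose ideal point is one of the chosen $n_1$ points of $O$, giving degree $n_1$; and each of the $m_1$ planes is an affine plane $\mathrm{AG}(2,q)$ containing $p^2$ points (here I would carefully track the role of $p$ versus $q$, using that $p$ is the prime chosen slightly larger than $n_1$ to index the actual points and lines retained), yielding $|V_1|=m_1p^2$. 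The bipartite biregularity then follows since all of $V_1$ has degree $n_1$ and all of $V_2$ has degree $m_1$, and the two classes are distinct by construction.

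Finally I would confirm the girth is exactly $8$ and not larger. Since $\Gamma$ is obtained by restricting to a subconfiguration of the generalized quadrangle, no cycle of length $4$ or $6$ can appear (a $4$-cycle would force two points on two common lines, a $6$-cycle is impossible in a bipartite incidence graph of a GQ), so the girth is at least $8$; to see it is exactly $8$ I would exhibit one ordinary quadrangle surviving in the retained configuration, which exists because $m_1,n_1\ge 2$ and the chosen planes and oval points give enough points and lines to close a proper quadrangle. Combining these gives girth $8$ precisely.

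The main obstacle I expect is bookkeeping the interplay between $q$ and the auxiliary prime $p$, and making precise exactly which affine points and lines are retained so that the counts $m_1p^2$ and $n_1p^2$ come out correctly; in particular, ensuring that restricting to $n_1$ points of $O$ and $m_1$ planes through $\ell$ does not accidentally lower a vertex's degree or destroy a surviving quadrangle. Once the retained point-line configuration is pinned down unambiguously, the degree and girth verifications are routine incidence arguments inside $T_2(O)$.
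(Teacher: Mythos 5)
Your degree counts, the vertex counts, and the lower bound on the girth (via embedding $\Gamma$ into the incidence graph of $T_2(O)$) all match the paper's proof. But there are two problems, one minor and one serious. The minor one: the structure whose points are the affine points and whose lines are the affine lines meeting $\Sigma$ in a point of $O$ is not $T_2(O)$ of order $(q-1,q+1)$; that order belongs to $T_2^*(\mathcal{O})$, which requires a \emph{hyperoval} ($q$ even). For an oval, $T_2(O)$ has order $(q,q)$ and contains additional points (tangent planes and a symbol) and lines (the points of $O$). This does not hurt your conclusion, since $\Gamma$ is still a subgraph of the incidence graph of the true $T_2(O)$ (or, as the paper's remark notes, one can argue directly that no $6$-cycle exists because $O$ is an arc), so girth at least $8$ survives.

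The serious gap is your argument that the girth is \emph{exactly} $8$. You propose to exhibit a surviving ordinary quadrangle and assert this is routine "because $m_1,n_1\ge 2$." That is false as stated, and the existence of a surviving $8$-cycle is genuinely delicate: it depends on \emph{which} $m_1$ planes and $n_1$ points of $O$ are retained. Concretely, an $8$-cycle in $\Gamma$ corresponds to four affine lines with direction vectors $d_1,\dots,d_4$ chosen among the $n_1$ selected points of $O$ and a linear dependence $\sum s_i d_i=0$ with all $s_i\neq 0$; since any three points of an arc are linearly independent, the directions must either be four distinct points of $O$ or two points used alternately. With $m_1=n_1=2$ and $p$ odd the alternating pattern forces the four "heights" (values of the linear form defining the planes through $\ell$) to form a parallelogram $\{h,\,h+A,\,h+A+B,\,h+B\}$ inside a two-element set, which is impossible; so the graph has girth strictly larger than $8$ there. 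Even with $n_1=3$ and planes at levels $\{0,1,3\}$, no such parallelogram fits for large $p$, so again no $8$-cycle survives. The paper avoids this entirely by an upper-bound counting argument: if the girth were at least $10$, the smaller bipartition class would have to contain at least $1+n_1(m_1-1)+n_1(n_1-1)(m_1-1)^2$ vertices, which exceeds $m_1p^2$ precisely because the prime $p$ can be taken close to $n_1$ (Baker--Harman--Pintz); note this also shows the statement is implicitly asymptotic in nature, since that inequality needs $n_1$ large. Your sketch, as written, cannot be completed without either adding hypotheses on the chosen planes and oval points or replacing the "exhibit a quadrangle" step by such a counting argument.
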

\begin{proof}
The parallel affine planes through $\ell$ contain $p^2$ points each, so $|V_1|=m_1p^2$. Since there are $p^2$ affine lines through an ideal point, we also get $|V_2|=n_1p^2$. A line through a point of $O$ meets the union of $m_1$ parallel lines in exactly $m_1$ points, since $\ell$ and $O$ are disjoint. Conversely, one can join any point in the union of $m_1$ planes with any of the $n_1$ points of $O$. Hence the vertices of $V_1$ have degree $n_1$, and the vertices of $V_2$ have degree $m_1$. The girth is at least 8, since $\Gamma$ is a 
subgraph of the incidence graph of $T_2(O)$. If the girth was at least 10, then the number of vertices in the smaller bipartition class would be at least 
$1+n_1(m_1-1)+n_1(n_1-1)(m_1-1)^2$, which is larger than $m_1p^2$ Here we use that $p$ is close 
enough to $n$, the best results on how close $p$ can be to $n$ is due to Baker, Harman and Pintz \cite{BHP}. It shows that there is a prime $p$ between $x$ and $x+x^{0.525}$. 
\end{proof}

{\rm 
\begin{remark}
{\rm In the proposition above, $O$ was an oval and we chose some points of $O$. Actually, it is only needed that the set of  ideal points through which we consider the affine lines as $V_2$, form an arc (no three of them are on a line). Then it is easy to verify that the corresponding bipartite graph cannot have a 6-cycle, hence its girth is at least 8. }
\end{remark}

\begin{remark}
{\rm If $m_1$ is constant times $n_1$, then the above construction gives the correct order of magnitude (disregarding a multiplicative constant) for the number of vertices. If $m_1=c\sqrt n_1$ for some $c\le 1$, then one can copy the above construction for the generalized quadrangle $T_3(O)$ and again get the right order of magnitude (up to constant factor) for the number of vertices.  In this case $\sqrt n_1<p$, and the two classes have $m_1p^3$ and $n_1p^3$ vertices, respectively. The same remark is also true for Theorems \ref{generalizationtoptree}, \ref{generalizationtop_r=4}, \ref{last-theorem}, we have to choose a prime $p$ not much larger than $m$, the smaller parameter of the generalized $r$-gon. As far as we could check, the best result for the order of magnitude for such a prime $p$, is the one in \cite{BHP}. We also mention that instead of a prime $p$ it would be enough to find a prime-power $q$.} 
\end{remark}

\begin{remark}
{\rm The construction can also be modified to the girth 6 case. We choose a prime with $n_1<p$ and pick $n_1$ ideal points different from the one of horizontal lines. We also take $m_1$ horizontal lines. The bipartite graph will have $V_1$ as the set of $m_1p$ points on the $m$ distinguished horizontal lines and $V_2$ as the set of $n_1p$ (affine) line through the $n_1$ distinguished ideal points. If both $m_1$ and $n_1$ go to infinity, then this will be asymptotically the correct number of vertices. Again, this easy construction can be modified using projective spaces if $m_1$ and $n_1$ do not have the same order of magnitude.} 
\end{remark}

\subsection{Constructions by deleting substructures of generalized polygons}

In this subsection we will consider constructions where we delete special substructures of  generalized polygons as ovoids, spreads, sub-polygons, etc.

We start by considering the case when the girth is $g=2r$ and $r=4.$ In this case, if we take a generalized quadrangle $\mathcal{G}$ of order $(m,n)$ that contains an ovoid $\mathcal{O}$ (a set of $mn+1$ points, no two of them are collinear) or a spread $\mathcal{S}$ (a set of $mn+1$ lines, no two of them intersecting) and we delete the points of $\mathcal{O}$ or the lines of $\mathcal{S}$, then the Levi graph 
of the remaining structure is a  
bipartite biregular graph with parameters $(m,n+1;8)$ or $(m+1,n;8)$ and its order is
$$(m+n+2)(mn+1)-(mn+1)=(mn+1)(m+n+1).$$
Theorem \ref{last-theorem} results in $(m,n+1;8)$-graphs on  
$mn(m+n+1)$ vertices, hence those graphs have fewer vertices.
For some values of $m$ and $n$ even smaller graphs can be constructed by careful deletion of subquadrangles of generalized quadrangles.
\\
\begin{theorem}\label{deleted-subgq}
Suppose that there exists a generalized quadrangle $\mathcal{G}$ of order $(m,n)$ which contains a subquadrangle $\mathcal{G}'$ of order $(m,n/m)$.  Then there exists an
$(m,n+1;8)$-bb-graph of order $(m+n+1)\frac{m^2-1}{m}n.$ 
\end{theorem}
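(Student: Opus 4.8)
The plan is to realize the desired graph as an induced subgraph of the Levi graph of $\mathcal{G}$. Recall that a generalized quadrangle of order $(m,n)$ has $(m+1)(mn+1)$ points and $(n+1)(mn+1)$ lines, with $m+1$ points on every line and $n+1$ lines through every point; its subquadrangle $\mathcal{G}'$ of order $(m,n/m)$ then has $(m+1)(n+1)$ points and $(n/m+1)(n+1)$ lines. The feature I would exploit is that these parameters sit exactly on the boundary $n=m\cdot(n/m)$, i.e.\ $t=st'$ in the usual notation, the extremal case for subquadrangles. Let $\Gamma$ be the Levi graph of $\mathcal{G}$ and let $\Gamma''$ be obtained from $\Gamma$ by deleting every vertex corresponding to a point or a line of $\mathcal{G}'$. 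I would then claim that $\Gamma''$ is the required $(m,n+1;8)$-bb-graph.

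First I would check the degrees. Since $\mathcal{G}'$ is a full subquadrangle ($s'=s=m$), each of its lines already carries all $m+1$ of its points of $\mathcal{G}$, so every point incident with a line of $\mathcal{G}'$ lies in $\mathcal{G}'$. Hence a surviving point $x\notin\mathcal{G}'$ has none of its $n+1$ lines deleted and keeps degree $n+1$. For the surviving lines I would show that a line $\ell\notin\mathcal{G}'$ meets the point set of $\mathcal{G}'$ in \emph{exactly} one point. That it meets $\mathcal{G}'$ in \emph{at most} one point follows from the no-triangle axiom: two points of $\mathcal{G}'$ on $\ell$ would be collinear in $\mathcal{G}'$ via a line of $\mathcal{G}'$, which by uniqueness of the joining line in $\mathcal{G}$ must equal $\ell$, forcing $\ell\in\mathcal{G}'$. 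To upgrade ``at most one'' to ``exactly one'' I would double-count the flags $(\ell,y)$ with $\ell\notin\mathcal{G}'$, $y$ a point of $\mathcal{G}'$ and $y\in\ell$: each of the $(m+1)(n+1)$ points of $\mathcal{G}'$ lies on $(n+1)-(n/m+1)=n(m-1)/m$ external lines, giving $(m+1)(n+1)\,n(m-1)/m$ flags, which is precisely the number $(n+1)\,n\,(m^2-1)/m$ of external lines. Combined with the at-most-one bound, every external line meets $\mathcal{G}'$ in exactly one point and hence survives with degree $m$. This yields degree set $\{m,n+1\}$, constant on each class; this counting step, where the relation $t=st'$ is used, is the heart of the argument.

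Next I would count vertices. The surviving points number $(m+1)(mn+1)-(m+1)(n+1)=(m^2-1)n$, and the surviving lines number $(n+1)(mn+1)-(n/m+1)(n+1)=(n+1)(m^2-1)n/m$, so the total order is $(m^2-1)n\,\bigl(1+(n+1)/m\bigr)=(m+n+1)\frac{m^2-1}{m}n$, as claimed. For the girth I would note that $\Gamma''$, being a subgraph of the incidence graph of a generalized quadrangle, has girth at least $8$; to see it is exactly $8$ I would invoke the bipartite Moore-type lower bound for girth $10$. The point class is the smaller class (it carries the larger degree $n+1$), and if the girth were at least $10$ the ball of radius $4$ about a point would be a tree, forcing at least $(n+1)(m-1)^2n$ points, which exceeds the available $(m^2-1)n$ for all $m\geq 3$; hence the girth is exactly $8$.

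The main obstacle is the degree computation for the surviving lines, namely establishing that every external line is tangent to $\mathcal{G}'$; this is exactly where the extremal parameter $n/m$ of the subquadrangle is needed, and it is what forces the line class to be uniformly of degree $m$ rather than a mixture of $m$ and $m+1$. A secondary point to address is that $\Gamma''$ is connected, so that it qualifies as a bb-graph in the sense of the definition; this should follow from the high connectivity of the Levi graph of a generalized quadrangle together with the fact that only a thin, tangent substructure has been removed, and can be checked directly using that each surviving point still sees all $n+1$ of its lines.
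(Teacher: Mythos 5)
Your proposal is correct and follows essentially the same route as the paper's proof: delete the points and lines of $\mathcal{G}'$, double-count the flags between points of $\mathcal{G}'$ and external lines (using $(n+1)-(n/m+1)=n(m-1)/m$ external lines per deleted point) to conclude that every surviving line meets $\mathcal{G}'$ in exactly one point, then count vertices to get $(m+n+1)\frac{m^2-1}{m}n$. If anything, your write-up is slightly more complete than the paper's, since you make explicit the ``at most one point of $\mathcal{G}'$ per external line'' step and the Moore-type argument that the girth is exactly $8$, both of which the paper leaves implicit.
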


\begin{proof}
Delete the points and lines of $\mathcal{G}'$ from $\mathcal{G}$ and let $\Gamma $ 
be the Levi graph of the remaining structure. Then the girth of $\Gamma $ is $8,$
it has 
$$(m+1)(mn+1)-(m+1)\left( m\frac{n}{m}+1\right) =(m^2-1)n$$
points and 
$$(n+1)(mn+1)-\left( \frac{n}{m}+1\right) \left(m \frac{n}{m}+1\right)
=(m^2-1)n\frac{n+1}{m}$$
lines. 

There are $(n+1)-(\frac{n}{m}+1)=\frac{(m-1)n}{m}$ lines through each 
deleted points. As
$$(m+1)\left( m\frac{n}{m}+1\right) \frac{(m-1)n}{m}=(m^2-1)n\frac{n+1}{m},$$
each remaining line contains exactly one deleted point. So $\Gamma $ is an 
$(m,n+1)$-bb-graph of order $(m+n+1)\frac{m^2-1}{m}n.$ 
\end{proof}





This construction works in the following cases:
\begin{enumerate}
\item
$\mathcal{Q}(4,q)$ contains $\mathcal{Q}(3,q)$ as a subquadrangle, in this case
$m=n=q$, the order of $\Gamma $ is $(q^2-1)(2q+1).$
\item
$\mathcal{Q}(5,q)$ contains $\mathcal{Q}(4,q)$ as a subquadrangle;
$T_3(\mathcal{O})$ contains $T_2(\mathcal{O}')$ as a subquadrangle, in these cases
$m=q$ and $n=q^2$, the order of $\Gamma $ is $(q^3-1)q(q+1).$
\item
$\mathcal{H}(4,q)$ contains $\mathcal{H}(3,q)$ as a subquadrangle, in this case
$m=q^2$ and $n=q^3$, the order of $\Gamma $ is $(q^4-1)q(q^3+q^2+1).$
\end{enumerate}

These GQs belong to the class of projective generalized polygons. 
A finite generalized polygon $\mathcal{G}=(\mathcal{P},\mathcal{L},\mathrm{I})$ is called \emph{projective}, if there exists a projective space 
$\mathcal{S}=\mathrm{PG}(d,q)$ for which  $\mathcal{P}$ is a subset of the set of points of $\mathcal{S}$, $\mathcal{L}$ is a subset of the set of lines of $\mathcal{S}$ and the incidence $\mathrm{I}$ is the one inherited from $\mathcal{S}.$ The space $\mathcal{S}$
is called the \emph{ambient space} of $\mathcal{G}$ if no proper subspace of it contains $\mathcal{P}.$
For this class a more geometric construction gives better graphs in some cases. 

\begin{theorem}\label{proj-torl}
Let $r>2$ be even number and suppose that there exists a finite projective generalized 
$r$-gon $\mathcal{G}$ of order $(m,n)$ whose ambient space is $\mathrm{PG}(d,q).$ Let $\mathcal{H}$ be a hyperplane in $\mathrm{PG}(d,q)$ which contains $u$ points of $\mathcal{G}.$ Then there exist an $(m,n+1;2r)$-bb-graph of order 
$$  
(m+n+1)\left( \frac{(m+1)((mn)^{\frac{r}{2}}-1)}{m(mn-1)}-\frac{u}{m}\right) .
$$
\end{theorem}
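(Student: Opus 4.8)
The plan is to obtain the graph by deleting from the Levi graph of $\mathcal{G}$ precisely the vertices that ``live on'' the hyperplane: the $u$ points of $\mathcal{G}$ lying in $\mathcal{H}$, together with every line of $\mathcal{G}$ that is entirely contained in $\mathcal{H}$. First I would rewrite the target order in geometric terms. The number of points of a generalized $r$-gon of order $(m,n)$ is the standard count $|\mathcal{P}|=(m+1)\frac{(mn)^{r/2}-1}{mn-1}$, so the bracketed factor $\frac{(m+1)((mn)^{r/2}-1)}{m(mn-1)}$ is simply $|\mathcal{P}|/m$, and the asserted order is $\frac{m+n+1}{m}\bigl(|\mathcal{P}|-u\bigr)$. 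My objective is therefore a bipartite graph whose point-class has $|\mathcal{P}|-u$ vertices of degree $n+1$ and whose line-class has $(|\mathcal{P}|-u)(n+1)/m$ vertices of degree $m$.

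The crux is to understand how $\mathcal{H}$ cuts the lines of $\mathcal{G}$. Since $\mathcal{G}$ is projectively embedded, each of its lines is a genuine line of $\mathrm{PG}(d,q)$, and the decisive property I would use is that the embedding is \emph{full}: every line of $\mathcal{G}$ carries all $q+1$ points of the ambient projective line, equivalently $m=q$ (this is the case for the projective polygons $\mathcal{Q}(4,q),\mathcal{Q}(5,q),\mathcal{H}(4,q)$ listed above). Granting fullness, a line $\ell$ of $\mathcal{G}$ behaves in one of two ways. If $\ell\subseteq\mathcal{H}$, then all its $m+1$ points are deleted, which is exactly why such lines must be removed. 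Otherwise $\ell$ meets $\mathcal{H}$ in a single projective point, which by fullness is a point of $\mathcal{G}$; hence $\ell$ loses exactly one point and its degree drops from $m+1$ to $m$. Symmetrically, a surviving point $P$ satisfies $P\notin\mathcal{H}$, so none of the $n+1$ lines through $P$ can be contained in $\mathcal{H}$, and $P$ retains all $n+1$ neighbours. This step is where the whole argument could fail: without fullness the intersection $\ell\cap\mathcal{H}$ might not be a point of $\mathcal{G}$, leaving a line of degree $m+1$ and breaking biregularity. Verifying (or citing) fullness is thus the main obstacle.

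It then remains to settle the girth and count the vertices. The constructed graph is a subgraph of the Levi graph of $\mathcal{G}$, which has girth $2r$; deleting vertices and edges destroys cycles but creates none, so the girth is at least $2r$. To rule out a larger value I would argue, exactly as in the proof of Theorem \ref{last-theorem}, that the order computed below is strictly less than the bipartite Moore-type lower bound for $(m,n+1;2(r+1))$-graphs; hence the girth cannot attain $2(r+1)$, and being even it equals $2r$. For the count, the point-class contributes $|\mathcal{P}|-u$ vertices, the total number of edges is $(|\mathcal{P}|-u)(n+1)$, and since every surviving line has degree $m$ the line-class contributes $(|\mathcal{P}|-u)(n+1)/m$ vertices (so the number of deleted lines is $\bigl(u(n+1)-|\mathcal{L}|\bigr)/m$, automatically a nonnegative integer as it counts actual lines contained in $\mathcal{H}$). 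Summing the two classes gives $\frac{m+n+1}{m}\bigl(|\mathcal{P}|-u\bigr)$, the claimed order. Finally I would note that connectivity is inherited from the Levi graph, so the result is a genuine $(m,n+1;2r)$-bb-graph.
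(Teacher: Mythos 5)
Your proposal follows essentially the same route as the paper's proof: delete the $u$ points of $\mathcal{G}$ in $\mathcal{H}$ together with the lines of $\mathcal{G}$ contained in $\mathcal{H}$, observe that each surviving line loses exactly one point while each surviving point keeps all $n+1$ of its lines, and then count vertices (your edge count $(|\mathcal{P}|-u)(n+1)$ divided by $m$ is the paper's double count in different clothing). The fullness issue you flag is genuine but is silently assumed in the paper's proof as well --- its assertion that each remaining line meets $\mathcal{H}$ in exactly one point \emph{of} $\mathcal{G}$ needs it --- and your Moore-bound argument pinning the girth at exactly $2r$ supplies a step the paper omits entirely.
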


\begin{proof}
Let $v$ denote the number of  those lines of 
$\mathcal{G}$ which are entirely contained in $\mathcal{H}$. Delete these lines and
the $u$ points of $\mathcal{G}$ which are contained in $\mathcal{H}.$ Each remained line intersects $\mathcal{H}$ in exactly 1 point. Hence the double counting of the incident
remained point - remained line pairs gives 
$$(n+1)\left((m+1) \frac{((mn)^{\frac{r}{2}}-1)}{mn-1} -u\right) =
m\left((n+1) \frac{((mn)^{\frac{r}{2}}-1)}{mn-1} -v\right) ,$$
so the number of remained lines equals to $\frac{n+1}{m}$ times the number of 
remained points.
$$u+v=\frac{m+n+1}{m}u-\frac{(n+1)((mn)^{\frac{r}{2}}-1)}{m(mn-1)} .$$
Thus the Levi graph $\Gamma $ of the remained structure is an 
$(m,n+1;2r)$-bb-graph on
$$
\begin{aligned}
 & \left(1+\frac{n+1}{m}\right) \left((m+1) \frac{(mn)^{\frac{r}{2}}-1)}{mn-1} -u\right) = \\
 & (m+n+1)\left( \frac{(m+1)((mn)^{\frac{r}{2}}-1)}{m(mn-1)}-\frac{u}{m}\right) 
\end{aligned}
$$
vertices.
\end{proof}

The constructions presented in Theorems \ref{deleted-subgq} and \ref{proj-torl}
often result the same graphs. 

Combining Corollary \ref{cor-8} and non-existence results for generalized polygons,
it is easy to improve the bipartite Moore bound in some cases.

\begin{theorem}
\label{betterbound}
Let $3\leq m$ be an integer. Then
\begin{itemize}
\item[{(i)}]
$B_c(m,m+1;8)\geq (2m+1)(m^2-m+2)$;
\item[{(ii)}]
$B_c(m,m^2+1;8)\geq (m^2+m+1)(m^3-m^2+2)$;
\item[{(iii)}]
$B_c(m,m+1;12)\geq (2m+1)(m^4-2m^3+2m^2-m+2)$.
\end{itemize}
\end{theorem}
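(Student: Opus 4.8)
The plan is to treat all three parts uniformly as applications of Corollary \ref{cor-8}: in each case I would specialise $n$ and $r$, verify that the generalized $r$-gon of order $(m-1,n-1)$ whose Levi graph would be a bb-Moore cage cannot exist, and then simplify the resulting lower bound $B(m,n;2r)+\frac{m+n}{\gcd(m,n)}$. The three instances are $(n,r)=(m+1,4)$ for (i), $(n,r)=(m^2+1,4)$ for (ii), and $(n,r)=(m+1,6)$ for (iii), so the forbidden orders are $(m-1,m)$, $(m-1,m^2)$ and $(m-1,m)$ respectively. Since $m\ge 3$ forces both parameters to exceed $1$ in every case, the relevant polygons are thick, and the classical feasibility conditions for thick generalized polygons (see \cite{VM98}) apply.

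Next I would dispose of existence in the three cases, invoking a different classical restriction in each. For (i), a thick generalized quadrangle of order $(s,t)$ satisfies the divisibility condition $s+t\mid st(s+1)(t+1)$; with $(s,t)=(m-1,m)$ this reads $(2m-1)\mid m^2(m^2-1)$, and reducing modulo $2m-1$ (using $2m\equiv 1$) shows the right-hand side is congruent to a unit times $3$, so the divisibility would force $2m-1\mid 3$, impossible for $m\ge 3$. For (ii), Higman's inequality $t\le s^2$ for thick quadrangles is violated by $(s,t)=(m-1,m^2)$, since $m^2>(m-1)^2$ for every $m\ge 1$. For (iii), a thick generalized hexagon of order $(s,t)$ can exist only if $st$ is a perfect square; here $st=(m-1)m=m^2-m$ lies strictly between the consecutive squares $(m-1)^2$ and $m^2$, hence is never a square for $m\ge 2$. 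In each case no generalized $r$-gon of the required order exists, so Corollary \ref{cor-8} applies.

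It then remains to evaluate the bound. Writing $a=(m-1)(n-1)$, the bipartite Moore bound (\ref{genMoore-even}) equals $(m+n)\frac{a^{r/2}-1}{a-1}$, which telescopes to $(m+n)(a+1)$ when $r=4$ and to $(m+n)(a^2+a+1)$ when $r=6$, via $\frac{a^2-1}{a-1}=a+1$ and $\frac{a^3-1}{a-1}=a^2+a+1$. Since $\gcd(m,m+1)=1$ and $\gcd(m,m^2+1)=1$, the additive term $\frac{m+n}{\gcd(m,n)}$ equals $2m+1$ in (i) and (iii) and $m^2+m+1=m+n$ in (ii); in each case it is exactly the leading factor, so adding it raises $a+1$ to $a+2$ and $a^2+a+1$ to $a^2+a+2$. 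Substituting $a=m^2-m$ in parts (i) and (iii) and $a=m^3-m^2$ in part (ii) and expanding yields the three displayed expressions.

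The algebraic simplification is routine; the only real content is the non-existence input, and the main obstacle is pinning down, for each order, which classical restriction does the job. In particular one must notice that Higman's inequality is useless for part (i) — it is \emph{satisfied} by $(m-1,m)$ whenever $m\ge 3$ — so that the quadrangle divisibility condition is needed there, while for part (iii) one must appeal to the hexagon perfect-square condition rather than to any inequality, the inequalities $s\le t^3$, $t\le s^3$ being far too weak to exclude $(m-1,m)$.
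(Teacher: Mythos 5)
Your proposal is correct and follows essentially the same route as the paper: each part is an application of Corollary \ref{cor-8}, with non-existence of the relevant polygon of order $(m-1,n-1)$ established by the Feit--Higman divisibility condition $s+t\mid st(s+1)(t+1)$ for (i), Higman's inequality $t\le s^2$ for (ii), and the Feit--Higman perfect-square condition for hexagons for (iii), followed by the same evaluation of $B(m,n;2r)+\frac{m+n}{\gcd(m,n)}$. The only difference is cosmetic: you spell out the modular computation showing $2m-1\nmid m^2(m^2-1)$ for $m\ge 3$, which the paper merely asserts.
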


\begin{proof} $(i)$: It was proved by Feit and Higman \cite{FH} (see also \cite[Theorem 10.23.]{KSz} or \cite[1.2.2.]{PT}) that if there exists a thick generalized quadrangle of order $(s,t),$ then $s+t$ divides $st(s+1)(t+1).$  If $m>2$, then $2m-1$ does not divide 
$(m-1)m^2(m+1),$ hence there is no generalized quadrangle of order $(m-1,m)$. 
As $m$ and $m+1$ are coprime, Corollary \ref{cor-8} gives that
$$
\begin{aligned}
B_c(m,m+1;8) & \geq B(m,m+1;8)+m+(m+1) \\ 
& =(2m+1)(m^2-m+2).
\end{aligned}
$$

$(ii)$: By Higman's inequality \cite{H75} (see also \cite[Theorem 10.24.]{KSz} or \cite[1.2.3.]{PT}), if 
$1<m\leq n$ and there exists a thick generalized quadrangle of order $(s,t),$ then 
$s^2\geq t.$  Hence, for $m>2$ there is no generalized quadrangle of order 
$(m-1,m^2).$  
As $m$ and $m^2+1$ are coprime, Corollary \ref{cor-8} gives that
$$
\begin{aligned}
B_c(m,m^2+1;8) & \geq B(m,m^2+1;8)+m+(m^2+1) \\ 
& =(m^2+m+1)(m^3-m^2+2) 
\end{aligned}
$$

$(iii)$:  It was proved by Feit and Higman \cite{FH} (see also 
\cite[Theorem 1.7.1.(iv)]{VM98}) that if $s,t>1$ and there exists a generalized hexagon of order $(s,t),$ then $st$ is a 
perfect square. If $m>2,$ then $(m-1)m$ is not a perfect square, hence there is no generalized hexagon of order $(m-1,m)$. 
As $m$ and $m+1$ are coprime, Corollary \ref{cor-8} gives that 
$$
\begin{aligned}
B_c(m,m+1;12) & \geq  B(m,m+1;12)+m+(m+1) \\ 
& =(2m+1)(m^4-2m^3+2m^2-m+2).
\end{aligned}
$$
\end{proof}

\begin{theorem}\label{q-(q+1)}
If $q$ is a prime power, then there exist a $(q,q+1;8)$-bb graph of order $(2q+1)(q^2-1).$ 

In particular, for $q=3$ there exists a $(3,4;8)$-bb-cage on $56$ vertices.
\end{theorem}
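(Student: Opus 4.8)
The plan is to exhibit the graph as an instance of the deletion construction developed in Theorem~\ref{deleted-subgq}, applied to a concrete generalized quadrangle, and then verify minimality in the special case $q=3$ using the lower bound of Theorem~\ref{betterbound}(i). First I would recall that for any prime power $q$ there exists the classical generalized quadrangle $\mathcal{Q}(4,q)$ of order $(q,q)$, and that it contains $\mathcal{Q}(3,q)$ as a subquadrangle of order $(q,1)$; this is exactly case~(1) in the list of examples following Theorem~\ref{deleted-subgq}. Taking $m=n=q$ in that theorem, the hypothesis "$\mathcal{G}$ of order $(m,n)$ contains a subquadrangle of order $(m,n/m)$" is satisfied with $n/m=1$, i.e.\ the subquadrangle $\mathcal{Q}(3,q)$. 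Substituting $m=n=q$ into the order formula $(m+n+1)\frac{m^2-1}{m}n$ from Theorem~\ref{deleted-subgq} gives
\[
(2q+1)\,\frac{q^2-1}{q}\,q=(2q+1)(q^2-1),
\]
and the resulting graph is a $(q,q+1;8)$-bb-graph. This establishes the first (existence) assertion directly; the work here is essentially bookkeeping, since Theorem~\ref{deleted-subgq} already does the double-counting that shows every surviving line meets exactly one deleted point.

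For the second assertion I would specialize to $q=3$, where the construction yields a $(3,4;8)$-bb-graph on $(2\cdot 3+1)(3^2-1)=7\cdot 8=56$ vertices. To conclude that this graph is a \emph{cage}, I must show that $56$ is also a lower bound for $B_c(3,4;8)$. Here I would invoke Theorem~\ref{betterbound}(i) with $m=3$, which gives
\[
B_c(3,4;8)\ge (2m+1)(m^2-m+2)\big|_{m=3}=7\cdot 8=56.
\]
Since the constructed graph realizes exactly this bound, it is an $(m,n;8)$-bb-graph of minimum order, hence a $(3,4;8)$-bb-cage. It is worth noting that Theorem~\ref{betterbound}(i) itself rests on the Feit--Higman divisibility condition ruling out a generalized quadrangle of order $(2,3)$, together with Corollary~\ref{cor-8}; so the matching of upper and lower bounds is not a coincidence but reflects the fact that no bb-Moore cage exists for these parameters.

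The only genuinely delicate point is the verification that the deletion really produces girth exactly $8$ and the claimed biregularity for $q=3$, where $q$ is small. The general argument in Theorem~\ref{deleted-subgq} guarantees girth $8$ (deletion cannot decrease girth below that of the quadrangle's incidence graph, and the counting forbids it from rising), and that each surviving point lies on $n+1=4$ lines while each surviving line carries $m=3$ points; I expect no new obstacle at $q=3$ beyond confirming that the inequalities underlying the existence of $\mathcal{Q}(4,3)$ and its subquadrangle $\mathcal{Q}(3,3)$ hold, which they do since both are classical objects. Thus the main conceptual step is simply recognizing that the $q=3$ case sits precisely at the threshold where the construction of Theorem~\ref{deleted-subgq} and the lower bound of Theorem~\ref{betterbound}(i) coincide.
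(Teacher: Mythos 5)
Your proposal is correct, and it reaches the same graph as the paper, but through a different general theorem. The paper does not invoke Theorem~\ref{deleted-subgq}; it works directly with the projective description: it takes the parabolic quadric $\mathcal{P}=\mathcal{Q}(4,q)$ in $\mathrm{PG}(4,q)$, intersects it with a hyperplane $\mathcal{H}$ meeting $\mathcal{P}$ in a hyperbolic quadric (so $u=(q+1)^2$ points and $2(q+1)$ lines are deleted), and then applies the hyperplane--deletion result, Theorem~\ref{proj-torl}, to get the order $(2q+1)(q^2-1)$. Your route instead applies the subquadrangle--deletion result, Theorem~\ref{deleted-subgq}, with $m=n=q$ and the grid $\mathcal{Q}(3,q)$ of order $(q,1)$ as the subquadrangle, which is exactly case~(1) in the paper's list following that theorem. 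These are two justifications of the same deletion, since $\mathcal{Q}(3,q)$ \emph{is} the hyperbolic quadric $\mathcal{P}\cap\mathcal{H}$; indeed the paper itself remarks that Theorems~\ref{deleted-subgq} and~\ref{proj-torl} ``often result the same graphs.'' What each buys: your route is purely combinatorial (the double counting in Theorem~\ref{deleted-subgq} needs no ambient projective space, only the existence of the subquadrangle), while the paper's route makes the deleted set concrete as a hyperplane section, and it is this hyperplane mechanism that the paper then reuses for the split Cayley hexagon in Theorem~\ref{gen-hex}, where no subquadrangle argument is available. The minimality part of your argument --- combining the construction at $q=3$ with the lower bound $B_c(3,4;8)\geq(2\cdot 3+1)(3^2-3+2)=56$ from Theorem~\ref{betterbound}(i), which rests on Feit--Higman ruling out a generalized quadrangle of order $(2,3)$ together with Corollary~\ref{cor-8} --- is identical to the paper's.
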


\begin{proof}
Let $\mathcal{P}$ be a parabolic quadric in PG(4,q) and $\mathcal{H}$ be a hyperplane such that
$\mathcal{P}\cap \mathcal{H}$ is a hyperbolic quadric (e.g: if $\mathcal{P}$ has equation $X_0^2+X_1X_2+X_3X_4=0,$
then the equation of $\mathcal{H}$ could be $X_0=0$). The points and lines entirely contained in $\mathcal{P}$ form the
classical generalized quadrangle $\mathcal{Q}(4,q)$, whose order is $(q,q)$,
so it has $q^3+q^2+q+1$ points and the same number of lines. Delete the $2(q+1)$ lines and $(q+1)^2$ points of $\mathcal{P}\cap \mathcal{H}$. By Theorem
\ref{proj-torl}, the Levi graph $\Gamma $ of the remaining structure is a 
$(q,q+1;8)$-bb-graph of order 
$$\left( (q^3+q^2+q+1)-2(q+1)\right)   + \left( (q^3+q^2+q+1)
-(q+1)^2\right) =(2q+1)(q^2-1).$$

By Theorem \ref{betterbound}$(i)$, $B_c(q,q+1;8)=(2q+1)(q^2-q+2).$ So 
for $q>2$ the graph $\G $ has $B_c(q,q+1;8)+(2q+1)(q-3)$ vertices. 
In particular, for $q=3$ we get a $(3,4;8)$-bb-cage on 56 vertices.
\end{proof}



\begin{theorem}\label{q-(q+1)}
If $q$ is a prime power, then there exist a $(q,q^2+1;8)$-bb-graph of order $(q^2+q+1)(q^3-q).$ 
\end{theorem}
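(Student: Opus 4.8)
The plan is to realise the graph as a projective deletion, in exactly the spirit of the preceding theorem but starting one dimension higher. I would take $\mathcal{G}=\mathcal{Q}(5,q)$, the elliptic quadric $\mathcal{Q}^-(5,q)$ in $\mathrm{PG}(5,q)$: this is a thick generalized quadrangle of order $(q,q^2)$ whose ambient space is all of $\mathrm{PG}(5,q)$ (the quadric is non-degenerate, so no proper subspace contains it), hence it is a projective GQ to which Theorem \ref{proj-torl} applies with $m=q$, $n=q^2$ and $r=4$. For the hyperplane $\mathcal{H}$ demanded by that theorem I would take any non-tangent hyperplane, i.e.\ one whose pole does not lie on the quadric; then $\mathcal{Q}(5,q)\cap\mathcal{H}$ is a non-degenerate parabolic quadric $\mathcal{Q}(4,q)$, and the lines of $\mathcal{G}$ lying inside $\mathcal{H}$ are precisely the lines of this $\mathcal{Q}(4,q)$.

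First I would record the number $u$ of points of $\mathcal{G}$ contained in $\mathcal{H}$: since the section is a $\mathcal{Q}(4,q)$, we have $u=(q+1)(q^2+1)=q^3+q^2+q+1$. Substituting $m=q$, $n=q^2$ (so $mn=q^3$) into the order formula of Theorem \ref{proj-torl}, the main term simplifies as
\[
\frac{(m+1)\bigl((mn)^{2}-1\bigr)}{m(mn-1)}=\frac{(q+1)(q^{6}-1)}{q(q^{3}-1)}=\frac{(q+1)(q^{3}+1)}{q},
\]
and subtracting $u/m=(q+1)(q^{2}+1)/q$ gives
\[
\frac{(q+1)(q^{3}+1)}{q}-\frac{(q+1)(q^{2}+1)}{q}=\frac{(q+1)(q^{3}-q^{2})}{q}=q(q^{2}-1)=q^{3}-q.
\]
Multiplying by $m+n+1=q^{2}+q+1$ then yields the claimed order $(q^{2}+q+1)(q^{3}-q)$, while Theorem \ref{proj-torl} simultaneously certifies that the Levi graph of the remaining structure is a $(q,q^{2}+1;8)$-bb-graph of girth $8$.

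The only genuinely geometric input, and hence the step needing the most care, is the claim that a non-tangent hyperplane meets the elliptic quadric in a non-degenerate $\mathcal{Q}(4,q)$ with exactly $q^{3}+q^{2}+q+1$ points, together with the existence of such a hyperplane (its pole must avoid the quadric, which is possible since $\mathcal{Q}(5,q)$ does not cover $\mathrm{PG}(5,q)$). These are standard facts about hyperplane sections of non-degenerate quadrics and about point counts of classical polar spaces, so I would cite the standard references rather than reprove them. I would also note that this is the same graph produced by Theorem \ref{deleted-subgq} applied to the subquadrangle $\mathcal{Q}(4,q)\subset\mathcal{Q}(5,q)$ of order $(q,q^2/q)=(q,q)$ (case $2$ in the list following that theorem), so the statement could equally be quoted from there; I prefer the projective route only because it matches the previous proof essentially verbatim.
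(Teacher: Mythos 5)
Your proposal is correct and follows essentially the same route as the paper: take the elliptic quadric $\mathcal{Q}(5,q)$ in $\mathrm{PG}(5,q)$, intersect with a non-tangent hyperplane to get a parabolic quadric section, delete its points and lines, and invoke Theorem \ref{proj-torl} with $m=q$, $n=q^2$, $u=q^3+q^2+q+1$. Your explicit arithmetic (which the paper omits) and the cross-check against Theorem \ref{deleted-subgq}, case 2, are both accurate.
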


\begin{proof}
Let $\mathcal{E}$ be an elliptic quadric in PG(5,q) and $\mathcal{H}$ be a 
non-tangent hyperplane of $\mathcal{E}.$ Then
$\mathcal{E}\cap \mathcal{H}$ is a parabolic quadric. The points and lines entirely contained in $\mathcal{E}$ form the
classical generalized quadrangle $\mathcal{Q}(5,q)$, whose order is $(q,q^2)$.
Delete the lines and points of $\mathcal{P}\cap \mathcal{H}$. By Theorem
\ref{proj-torl}, the Levi graph $\Gamma $ of the remaining structure is a 
$(q,q^2+1;8)$-bb-graph of order $(q^2+q+1)(q^3-q)$.
\end{proof}

\noindent 
\begin{remark}
{\rm 
By Theorem \ref{betterbound}$(ii)$, $B_c(q,q^2+1;8)=(q^2+q+1)(q^3-q^2+2)$. 
So the graph $\G $ has $B_c(q,q^2+1;8)+(q^2+q+1)(q^2-q-2)$ vertices.}
\end{remark} 
%

The method of Theorem \ref{q-(q+1)} can be extended to a class of generalized 
hexagons, too. For the detailed description of  generalized hexagons we refer the
reader to the book of Van Maldeghem \cite{VM98}, a brief description can also be found in 
\cite[Example 10.39]{KSz}.

\begin{theorem}\label{gen-hex}
If $q$ is a prime power, then there exist a $(q,q+1;12)$-bb-graph
of order $(2q+1)(q^4-q).$
\end{theorem}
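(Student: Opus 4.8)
The plan is to obtain the graph as the output of Theorem~\ref{proj-torl} applied to a projective generalized hexagon of order $(q,q)$, in exact analogy with the way Theorem~\ref{q-(q+1)} applied Theorem~\ref{proj-torl} to the parabolic quadric carrying $\mathcal{Q}(4,q)$. First I would take $\mathcal{G}$ to be the split Cayley hexagon $H(q)$, which exists for every prime power $q$, has order $(q,q)$, and is embedded so that its point set is precisely the point set of a non-degenerate parabolic quadric $Q(6,q)$ in $\mathrm{PG}(6,q)$, its lines being certain lines of that quadric. Since the points of $Q(6,q)$ span the whole space, the ambient space of $\mathcal{G}$ is $\mathrm{PG}(6,q)$, and the hypotheses of Theorem~\ref{proj-torl} are satisfied with $m=n=q$, $d=6$ and $r=6$ (so $r/2=3$).

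Next I would pick the hyperplane. Writing the quadric in normalised form $X_0^2+X_1X_2+X_3X_4+X_5X_6=0$, take $\mathcal{H}$ to be $X_0=0$. Its intersection with $Q(6,q)$ is the quadric $X_1X_2+X_3X_4+X_5X_6=0$ of $\mathrm{PG}(5,q)$, a hyperbolic quadric $Q^+(5,q)$ of Witt index $3$. Hence the number of points of $\mathcal{G}$ contained in $\mathcal{H}$ is $u=|Q^+(5,q)|=(q^2+1)(q^2+q+1)=q^4+q^3+2q^2+q+1$; this is the exact hexagonal counterpart of the hyperbolic section used in the girth-$8$ case, and the count holds for every $q$.

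It then remains to substitute into the order formula of Theorem~\ref{proj-torl}. With $m=n=q$ and $r=6$ the hexagon has $\frac{(q+1)(q^6-1)}{q^2-1}=(q+1)(q^4+q^2+1)=q^5+q^4+q^3+q^2+q+1$ points, so the bracket in the formula becomes $\frac{1}{q}\left((q^5+q^4+q^3+q^2+q+1)-u\right)=\frac{q^5-q^2}{q}=q^4-q$, and multiplying by $m+n+1=2q+1$ yields the claimed order $(2q+1)(q^4-q)$. By Theorem~\ref{proj-torl} the Levi graph of the structure that remains after deleting the points and fully-contained lines of $\mathcal{H}$ is then a $(q,q+1;12)$-bb-graph of this order.

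Essentially all the work is bookkeeping once the correct objects are named; the only genuine input is the structural fact that $H(q)$ is a projective generalized hexagon whose point set is $Q(6,q)\subset\mathrm{PG}(6,q)$, which I would quote from Van Maldeghem~\cite{VM98} (or the description in \cite[Example 10.39]{KSz}). The one place that warrants a moment's care is the characteristic-two case: there $Q(6,q)$ carries a nucleus and $H(q)$ also admits an embedding in $\mathrm{PG}(5,q)$. However, the quadric points still span $\mathrm{PG}(6,q)$, so the ambient space is unchanged, and the section $\mathcal{H}\cap Q(6,q)$ is still a hyperbolic quadric with $(q^2+1)(q^2+q+1)$ points; thus the computation goes through verbatim and no separate treatment of even $q$ is needed.
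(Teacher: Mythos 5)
Your proposal is correct and follows essentially the same route as the paper's own proof: both take the split Cayley hexagon of order $(q,q)$ embedded in the parabolic quadric $Q(6,q)\subset\mathrm{PG}(6,q)$, cut with the hyperplane $X_0=0$ to get a hyperbolic quadric section with $u=(q^2+1)(q^2+q+1)$ points, and apply Theorem~\ref{proj-torl}. The only differences are cosmetic (you substitute $u$ into the order formula, whereas the paper counts remaining points and lines directly, giving $(q^5-q^2)+(q^4-q)(q+1)$), plus your extra remark on even characteristic, which is a welcome but inessential refinement.
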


\begin{proof}
Let $\mathcal{P}$ be a parabolic quadric in PG(6,q) and $\mathcal{H}$ be a hyperplane such that
$\mathcal{P}\cap \mathcal{H}$ is a hyperbolic quadric (e.g: if $\mathcal{P}$ has equation $X_0^2+X_1X_2+X_3X_4+X_5X_6=0,$
then the equation of $\mathcal{H}$ could be $X_0=0$). The points and a subset 
$\mathcal{L}$
of the lines entirely contained in $\mathcal{P}$ form the split Cayley hexagon $H_q$, whose order is $(q,q)$.  
Delete the 
points of $\mathcal{P}\cap \mathcal{H}$ and those lines of 
$\mathcal{L}$ which are entirely contained in $\mathcal{H}$.


Then, by Theorem \ref{proj-torl}, the Levi graph $\Gamma $ of the remaining structure is a $(q,q+1;12)$-bb-graph on
$$(q^5-q^2) + (q^4-q)(q+1) =(2q+1)(q^4-q)$$
vertices.
\end{proof}

\noindent 
\begin{remark}
{\rm By Theorem \ref{betterbound}$(iii)$, $B_c(q,q+1;12)=(2q+1)(q^4-2q^3+2q^2-q+2).$ 
So the graph $\G $ has $B_c(q,q+1;12)+(2q+1)(2q^3-2q^2-2)$ vertices. 
}
%
\end{remark}

\begin{remark}
{\rm For $r=6$ and $m=n=q$ Theorem \ref{last-theorem} gives a graph on $(2q+1)q^4$
vertices.}
\end{remark}

\noindent
{\bf Gabriela Araujo-Pardo:} 
Instituto de Matem\'aticas-Campus Juriquilla, Universidad Nacional Aut\'onoma de 
M\'exico, C.P. 076230, Boulevard Juriquilla \# 3001, Juriquilla, Qro., M\'exico;  \\
 e-mail:  \texttt{garaujo@im.unam.mx} \\

\noindent
{\bf Gy\"orgy Kiss:} Department of Geometry and ELKH-ELTE Geometric and Algebraic Combinatorics
Research Group, E\"otv\"os Lor\'and University, 1117 Budapest, P\'azm\'any
s. 1/c, Hungary; and Faculty of Mathematics, Natural Sciences and Information Technologies, University of Primorska, Glagolja\v ska 8, 6000 Koper,
Slovenia; \\
e-mail: \texttt{gyorgy.kiss@ttk.elte.hu} \\

\noindent
{\bf Tam\'as Sz\H{o}nyi:} Department of Computer Science and ELKH-ELTE Geometric and Algebraic Combinatorics
Research Group, E\"otv\"os Lor\'and University, 1117 Budapest, P\'azm\'any
s. 1/c, Hungary; and Faculty of Mathematics, Natural Sciences and Information Technologies, University of Primorska, Glagolja\v ska 8, 6000 Koper,
Slovenia; \\ e-mail: \texttt{tamas.szonyi@ttk.elte.hu}


\begin{thebibliography}{10}

\bibitem{AGMS07}
G. Araujo-Pardo, D. Gonz\'alez-Moreno, J.J. Montellano-Ballesteros, O. Serra, 
On upper bounds and connectivity of cages,
{\em Australasian J. Combin.} {\bf 38} (2009) 221-228. 

\bibitem{ABGMV08} 
G. Araujo-Pardo, C. Balbuena, P. Garc\'ia-V\'azquez, X. Marcote, J.C. Valenzuela,
On the order of $(\{r,m\};g)$-cages  of even girth,
{\em Discrete Math.} {\bf{308}} (2008) 2484-2491.

\bibitem{ABH10} 
G. Araujo-Pardo, C. Balbuena, and T. H\'eger, 
Finding small regular graphs of girths 6, 8 and 12 as subgraphs of cages,
{\em Discrete Math.} {\bf 310 (8)} (2010) 1301-1306.

\bibitem{AraExoJaj}
G. Araujo-Pardo, G. Exoo and R. Jajcay, Small biregular graphs of even girth,
{\em Discrete Math.} {\bf 339} (2016) 658-667.

\bibitem{AJRS22}
G. Araujo-Pardo, A. Ramos-Rivera, R. Jajcay, T. Sz\H onyi, On a relation between bipartite biregular cages, block designs, and generalized polygons, {\em J. Combin. Des.} {\bf 30}. No.7 (2022) 479-496. https://doi.org/10.1002/jcd.21836


\bibitem{BHP}
R. C. Baker, G. Harman, J. Pintz, The difference between consecutive primes, II, \emph{Proc. London Math. Soc.} 83 (3) (2001), 532–-562.




\bibitem{BeuM}
L. Beukemann, K. Metsch, Regular graphs constructed from the classical generalized
quadrangle $Q(4,q),$ \emph{J. Combin. Des.} {\bf 19} (2011), 70--83.


 \bibitem{BobJajPis} M.\ Boben, R.\ Jajcay and T.\ Pisanski, 
Generalized cages,
{\em Electron. J. Combin. } {\bf 22(1)} (2015) \#P1.77.


\bibitem{cvl}
P.~J.~Cameron, J.~H.~van Lint, \emph{Designs, Graphs, Codes and their
Links,} LMS Student Texts 22, Cambridge University Press, 1991.



\bibitem{ColDin} C.\ J.\ Colbourn and J.\ Dinitz (eds.),
{\em The CRC handbook of combinatorial designs. 2nd ed.},
Discrete Mathematics and its Applications, Boca Raton, FL: Chapman \& Hall/CRC  (2007) 984 p.

\bibitem{DHS}
G. Dam\' asdi, T. H\' eger and T. Sz\H{o}nyi,
The Zarankiewicz problem, cages, and geometries,
{\em Ann. Univ. Sci. Budap. Rolando E\" otv\" os}, Sect. Math. 
{\bf 56} (2013) 3-37.



\bibitem{ExooJaj08}
G.\ Exoo and R.\ Jajcay,
Dynamic cage survey,
{\em Electron. J. Combin., Dynamic Survey} {\bf 16} (2008).

\bibitem{ExooJaj16}
G. Exoo and R. Jajcay, Biregular cages of odd girth, 
{\em J. Graph Theory} {\bf 81}, No. 1 (2016) 50-56. 




\bibitem{FH}
W. Feit and G. Higman,The nonexistence of certain generalized polygons,
\emph{J. Algebra}, {\bf 1} (1964), 114--131.

\bibitem{Fil17}
S.\ Filipovski,
On bipartite cages of excess $4$,
{\em Electron. J. Combin.} {\bf 24(1)} (2017) \#P1.40.

 
 \bibitem{FilRamRivJaj19}
S. Filipovski, A. Ramos-Rivera and R. Jajcay, On biregular bipartite graphs of small excess,
{\em Discrete Math.} {\bf 342} (2019) 2066-2076. 


\bibitem{FuLaSeUsWol95}
Z.\ F\" uredi, F.\ Lazebnik, \' A.\ Seress, V.\ A.\ Ustimenko and A.\ J.\ Woldar, \newblock
Graphs of prescribed girth and bi-degree, \newblock
{\em J. Comb. Theory Ser. B  } {\bf 64} (1995) 228-239.



\bibitem{GH}
A. G\'acs, T. H\'eger, On geometric constructions of $(k, g)$-graphs 
\emph{Contrib. Discrete Math.} {\bf 3} (2008), 63–-80.



\bibitem{JWGJS}
\v S. Gy\" urki, R. Jajcay, P. J\' ano\v s, J. \v Sir\' a\v n and Y. Wang, Using bi-coset graphs to construct 
small regular and biregular graphs, submitted for publication.

\bibitem{H75}
D.~G. Higman, Invariant relations, coherent configurations and generalized polygons, 
in: \emph{Combinatorics} (ed. M. Hall and J. H. Van Lint), Reidel, Dordrecht (1975),
347--363.








\bibitem{KSz}
Gy.~Kiss, T.~Sz\H onyi, \emph{Finite Geometries,} CRC Press,Taylor \& Francis Group, 2019.

\bibitem{PT}
S.~E.~Payne, J.~A.~Thas, \emph{Finite Generalized Quadrangles}, Second Ed., 
EMS Publishing House, 2009.




\bibitem{VM98}
H. Van Maldeghem,
{\em Generalized Polygons},
Birkh\"auser, Springer, Basel (1998).

\bibitem{Wil}
R.M. Wilson, An existence theory for pairwise balanced designs, III: Proof of the existence conjectures, {\em J. Combin. Theory Ser. A} 
{\bf 18} (1975) 71-79.



\bibitem{YuLi03}
Y. Yuansheng and W. Liang,
The minimum number of vertices with girth $6$ and degree set $D=\{r,m\}$,
{\em Discrete Math.} {\bf  269} (2003) 249-258.

\end{thebibliography}
\end{document}